\newtheorem{thm}{Theorem}
\newtheorem{lem}{Lemma}
\newtheorem{clm}{Claim}
\newtheorem{cor}{Corollary}
\newcommand{\indicator}[1]{\mathbbm{1}_{\{ #1 \}}}
\def\Var{\mathop{\rm Var}\nolimits}
\def\Cov{\mathop{\rm Cov}\nolimits}
\def\Im{{\rm Im}}
\newcommand{\Ker}{\mathrm{Ker}}
\def\Spec{{\rm Spec}}
\newcommand{\R}{\mathbb R}
\newcommand{\Z}{\mathbb Z}
\newcommand{\N}{\mathbb N}
\newcommand{\E}[2][]{\ensuremath{\mathbb{E}_{#1} \left[#2 \right]}}
\newcommand{\Prob}[2][]{\ensuremath{\mathbb{P}_{#1} \left(#2 \right)}}
\newcommand{\eps}{\varepsilon}
\newcommand{\dg}[2]{\mathrm{deg}_{#2}(#1)}
\newcommand{\dgr}[1]{\mathrm{deg}(#1)}
\newcommand{\cE}{\mathcal {E}}
\newcommand{\cH}{\mathcal {H}}
\newcommand{\Bin}{\mathrm{Bin}}
\newcommand{\dgg}[2]{\mathrm{deg}_{#1} (#2)}
\newcommand{\lk}{\mathrm{lk}}
\newcommand{\C}[3]{
\ifthenelse{\isempty{#3}}
{\ifthenelse{\isempty{#2}}
{{\mathcal K}_{#1}} 
{{\mathcal K}_{#1} \left(#2 \right)}
} 
{\ifthenelse{\isempty{#2}}
{{\mathcal K}_{#1}^{(#3)}} 
{{\mathcal K}^{(#3)} \left(#2 \right)}
}
}
\title{Algebraic and combinatorial expansion in random simplicial complexes}
\author{Nikolaos Fountoulakis\thanks{School of Mathematics, University of Birmingham, Edgbaston, Birmingham, UK. Supported by the EPSRC grant EP/P026729/1. \newline  E-mail: \texttt{n.fountoulakis@bham.ac.uk} and \texttt{michal.przykucki@gmail.com}.}\ \ and Micha{\l} Przykucki\footnotemark[1]}
\begin{document}
\maketitle
\abstract{In this paper we consider the expansion properties and the spectrum of the combinatorial Laplace operator of a $d$-dimensional Linial-Meshulam random simplicial complex, above the cohomological connectivity threshold.  We consider the spectral gap of the Laplace operator and the Cheeger constant as this was introduced by Parzanchevski, Rosenthal and Tessler (\emph{Combinatorica} 36, 2016).
We show that with high probability the spectral gap of the random simplicial complex as well as the Cheeger constant are both concentrated around the minimum co-degree of among all $d-1$-faces. Furthermore, 
we consider a generalisation of a random walk on such a complex and show that the associated conductance 
is with high probability bounded away from 0. 
}
\noindent  \bigskip
\\

{\small {\bf Keywords}: random simplicial complexes, Laplace operator, Cheeger constant, conductance }

{\small {\bf AMS Subject Classification 2020}: 05E45; 05C81; 55U10} 

\section{Introduction} 
In this paper will consider the expansion properties of a random binomial simplicial complex past the threshold 
for the cohomological connectivity. 
This model was introduced by Linial and Meshulam~\cite{ar:LinMesh2006} and it is 
a generalisation of the binomial random graph $G(n,p)$. 
Let $Y(n,p;d)$ denote the random $d$-dimensional simplicial complex on $[n]:=\{1,\ldots, n\}$ where all possible 
faces of dimension up to $d-1$ are present but each subset of $[n]$ of size $d+1$ becomes a face with probability $p=p(n) \in [0,1]$, independently of every other subset of size $d+1$.  When $d=1$, the model reduces to the binomial random graph on $[n]$ with edge probability equal to $p$.  

In their seminal paper, Linial and Meshulam~\cite{ar:LinMesh2006} considered the \emph{cohomological connectivity} of $Y (n,p;2)$, that is whether the cohomology group $H^1(Y(n,p;d);\Z_2)$  over $\Z_2$ on dimension $1$ is trivial. 
They discovered a threshold function for the 2-face probability $p$:
$$\lim_{n\to \infty} \Prob{H^1(Y(n,p;d);\Z_2) \ \mbox{is trivial}} = 
\begin{cases}
1, & \mbox{if $p = \frac{2\log n + \omega(n)}{n}$} \\
0, & \mbox{if $p = \frac{2\log n - \omega (n)}{n}$} 
\end{cases},
 $$ 
where $\omega: \N \to \R_+$ is an arbitrary function such that $\omega (n) \to \infty$ as $n\to \infty$ and 
$\log$ is the natural logarithm.

This generalises the classic theorem of Gilbert~\cite{ar:Gilbert59} and 
Erd\H{o}s and R\'enyi~\cite{ar:ErdosRenyi60} regarding the (graph) connectivity of $G(n,p)$ ($G(n,m)$, respectively). In particular, let $\omega : \N \to \R_+$
be a function such that $\omega (n) \to \infty $ as $n \to \infty$; 
if $p = \frac{\log n + \omega (n)}{n}$, then w.h.p. $G(n,p)$ is connected, whereas if $p = \frac{\log n - \omega (n)}{n}$, then w.h.p. $G(n,p)$ has an isolated vertex. 

The theorem of Linial and Meshulam was extended by Meshulam and Wallach~\cite{meshulamwallach} to any dimension $d \geq 2$, with $\Z_2$ replaced by any finite group $R$:
$$\lim_{n\to \infty} \Prob{H^{d-1}(Y(n,p;d);R) \ \mbox{is trivial}} = 
\begin{cases}
1, & \mbox{if $p = \frac{d\log n + \omega(n)}{n}$} \\
0, & \mbox{if $p = \frac{d\log n - \omega (n)}{n}$} 
\end{cases},
 $$  
 where $\omega$ is as above. 
 Linial and Meshulam~\cite{ar:LinMesh2006} asked whether this can be extended to $\Z$ (for random 
 $2$-complexes). 
 For $d=2$, {\L}uczak and Peled~\cite{ar:LuczakPeled2017} proved a hitting time  version of this result 
 considering the generalisation of the random graph process. This is a random process in which one constructs a 
 random simplicial complex on $n$ vertices with complete skeleton,  where in each step 
 a new 2-dimensional face is added, selected uniformly at random. 
 They showed that w.h.p. the $1$-homology group over $\Z$ becomes trivial the very moment all edges (1-faces) 
 lie in at least one 2-face. This was proved for $\Z_2$ by Kahle and Pittel~\cite{ar:KahlePittel2016}. 
 These results generalise the classic result of Bollob\'as and Thomason~\cite{ar:BolThom83} on the w.h.p. coincidence of the hitting times of connectivity with that of having minimum degree at least 1. 
For $d\geq3$,  Hoffman et al.~\cite{ar:HoffKahlePaq2017} provided a partial answer showing that the 1-statement holds for $H^{d-1} (Y(n,p;d);\Z)$ provided that $np \geq 80 d \log n$. 

Furthermore, Gundert and Wagner~\cite{gundert2016eigenvalues} showed that $H^{d-1} (Y(n,p;d); \R)$ is 
trivial provided that $np \geq C \log n$ where $C$ is a sufficiently large constant. Their approach was extended
by Hoffman, Kahle and Paquette~\cite{hoffman2012spectral} which extended this to $p$ such that $np\geq (1+\eps) d\log n$.  Very recently, Cooley, del Guidice, Kang and Spr\"ussel~\cite{ar:CdGKS2020} considered 
the cohomological connectivity of a generalised version of the Linial-Meshulam model in which random selection 
of faces takes place at all levels and not merely at the top level. 

In this paper, we will study the expansion properties of $Y(n,p;d)$ for $p$ as in the supercritical regime 
of the Linial-Meshulam-Wallach theorem. To be more precise, we will consider the case $np = (1+\eps)d\log n$, 
for an arbitrary fixed $\eps >0$,
and we will deduce sharp concentration results about the spectral gap of the (combinatorial) Laplace operator as well as the Cheeger constant of $Y(n,p;d)$.

For a sequence of events $(\cE_n )_{n \in \N}$, where $\cE_n$ is an event in the probability space of 
$Y(n,p;d)$, we say that they occur \emph{with high probability (w.h.p.)}, if $\Prob{\cE_n} \to 1$ as $n\to \infty$.  
(We will use the same term for events in the probability space of $G(n,p)$.)
If $X_n$ is a random variable defined on the  probability space of $Y(n,p;d)$ and $c \in \R$, we write 
$X_n = c (1+o_p(1))$, if $\Prob{|X_n - c| > \eps } \to 0$ as $n\to \infty$ - so loosely speaking 
$X_n \to c$ in probability as $n\to \infty$.

\subsection{Measures of expansion: the spectral gap and the Cheeger constant}

The definition and the use of the discrete Laplace operator in quantifying expansion properties of graphs 
 dates back to Alon and Milman~\cite{ar:AlonMilman85}. 
 For a graph $G=(V,E)$, the (combinatorial) Laplace operator $\Delta^+_G$ is defined as the difference 
 $D_G - A_G$, where $D_G=\mathrm{diag} ( \dgr{v})_{v\in V}$ and $A_G$ is the adjacency matrix of $G$. 
 In~\cite{ar:AlonMilman85}, Alon and Milman showed how the smallest positive 
 eigenvalue of the Laplace operator is linked to the structure of the graph as a metric space and, 
 in particular, to the distribution of distances between disjoint sets and the diameter of the graph.  
 The Laplace operator had been considered in graph theory earlier~\cite{tr:AndMor71,bk:Biggs74, ar:Fiedler73}
in relation to the number of spanning trees, the girth and connectivity of a graph. 

Let $\lambda (G)$ denote the smallest positive eigenvalue of $\Delta^+_G $ also known as the \emph{spectral gap}
of $\Delta^+_G$. It is a consequence of 
a more general result in~\cite{ar:AlonMilman85} that $\lambda (G)$ is bounded from above 
(up to some multiplicative constant) by the \emph{edge expansion} of $G$. 
This is defined as 
$$c (G):= \min_{A \subset V \ : \ 0 < |A|\leq |V|/2} \frac{e(A, V\setminus A)}{|A|},$$ where 
$e(A, V \setminus A)$ denotes the number of edges with one endpoint in $A$ and the other in $V\setminus A$;
it is called the \emph{Cheeger constant} of $G$. 
Lemma 2.1 in~\cite{ar:AlonMilman85}
implies that for any non-empty (proper) subset $A \subset V$ we have 
\begin{equation} \label{eq:graph_Cheeger} 
\lambda (G) \leq \frac{n \cdot e(A, V\setminus A)}{|A| |V\setminus A|} =h(A;G). 
\end{equation}
If $|A| \leq |V|/2$, then the above is at most $2c(G)$.
This is the discrete analogue of an inequality proved by Cheeger in~\cite{ar:Cheeger1970}. Setting 
$h(G) = \min_{A \ : \ 0 < |A| \leq |V|/2} h(A;G)$ one can complete the above inequality with a lower bound 
(proved by Dodziuk~\cite{ar:Dodziuk84}) and get
\begin{equation} \label{eq:Laplace-Cheeger-graphs}
\frac{h^2 (G)}{8 d_{\max} (G)} \leq \lambda (G) \leq h(G) \leq 2c(G), 
\end{equation}
where $d_{\max} (G)$ is the maximum degree of $G$. 
Another consequence of this result is that if $d_{\min} (G)$ denotes the minimum degree of $G$, 
then $\lambda (G) \leq \frac{|V|}{|V|-1} \delta (G)$ (this was also proved in~\cite{ar:Fiedler73}). 
Moreover, if $G$ is disconnected, then $\lambda (G)=0$. Thus, sometimes $\lambda (G)$ is called 
the \emph{algebraic connectivity} of $G$.  
Further properties of expander graphs in relation to the smallest positive eigenvalue of the Laplace 
operator were obtained by Alon in~\cite{ar:Alon86}. 

More generally, the spectrum of the Laplace operator of a graph also determines how the edges between subsets
of vertices are distributed. This is expressed through the well-known \emph{Expander-Mixing Lemma}. 
Roughly speaking, it states that if the entire non-trivial spectrum of the Laplace operator of a graph $G=(V,E)$ 
is close to $d$, then the density of edges between any two non-empty subsets $A, B \subset V$ is about $d/n$. Such an estimate about the number of edges within any given subset of $A \subset V$ 
was proved by Alon and Chung~\cite{ar:AlonChung88}, in the case where $G$ is a $d$-regular graph. 
It was generalised by Friedman and Pippenger in~\cite{ar:FriedmanPipp87}. 

The spectral gap of $\Delta^+(G(n,p))$ was considered recently by Kolokolnikov et al.~\cite{kolokolnikov2014algebraic}. 
Kolokolnikov et al.~\cite{kolokolnikov2014algebraic} considered $p = c \log n /n$, for $c > 1$ (that is, above the connectivity threshold), and showed that in this case w.h.p. 
\begin{equation}\label{eq:lambda_Gnp} 
|\lambda (G(n,p))- d_{\min} (G(n,p))| < C \sqrt{\log n}. 
\end{equation}

One of the results of our paper is to generalise this result to higher dimensions in the context of the Linial-Meshulam 
random simplicial complex $Y(n,p;d)$. 

\subsection{High dimensional Laplace operators}

Let $Y$ be a $d$-dimensional simplicial complex on a set $V$ with $|V|< \infty$. We let $Y^{(j)}$ denote  the 
set of $j$-dimensional faces in $Y$, that is, the faces containing exactly $j+1$ vertices, where $-1\leq j\leq d$. 
It is customary to set $Y^{(-1)} =\{ \varnothing \}$.  Also, note that $Y^{(0)} = V$. 
For abbreviation, we will be calling 
a $j$-dimensional face a \emph{$j$-face}. 
If all possible $j$-faces are present in $Y$, for $j \leq d-1$, then $Y$ is said to have \emph{complete skeleton}. 

Furthermore, for a $d-1$-face $\sigma$ in $Y$ its \emph{degree} $\dgr{\sigma}$ is the co-degree of $\sigma$ 
in $Y$, that is, $\dgr{\sigma} = |\{ v \in Y^{(0)} \ : \ \{v\} \cup \sigma \in Y^{(d)}  \}|$. 
We let $\delta (Y) = \min_{\sigma \in Y^{(d-1)}} \dgr{\sigma}$ 
%and $\Delta (Y) = \max_{\sigma \in Y^{(d-1)}} \dgr{\sigma}$ 
be the minimum  co-degree of a $d-1$-face in $Y$. If $d=1$, then $\delta (Y)$ coincides with $d_{\min}$ of 
the associated graph.

In our context a $j$-face for $j\geq 2$ has two orientations which are 
the two equivalence classes of all permutations of its vertices which have the same sign. In other words, 
two permutations correspond to the same orientation if we can derive one from the other applying an even 
number of transpositions. 
If $\sigma$ is an oriented $j$-face, we denote by $\bar{\sigma}$ the opposite orientation of it, and by $Y^{(j)}_{\pm}$ we denote the set of oriented $j$-faces. Finally for an oriented $j$-face $\rho = [v_0,\dots, v_j]$, 
with $1 \leq j \leq d$, we let $\partial \rho$ denote the \emph{boundary of $\rho$}, which is the  set of oriented $j-1$-faces
$(-1)^i \rho \setminus v_i := (-1)^i [v_0,\ldots, v_{i-1},v_{i+1},\ldots, v_j]$, for $i=0,\ldots, j$.

The space of \emph{$j$-forms}, which we denote by $\Omega^{(j)}(Y;\R)$ is the vector space over $\R$ of all 
skew-symmetric functions on oriented $j$-faces. In other words, for $j \geq 2$ we define
$$\Omega^{(j)} (Y;\R) = \{ f: Y^{(j)}_{\pm} \to \R \ : \ f(\bar{\sigma}) = - f(\sigma), \ \forall \sigma \in Y^{(j)}_{\pm} \}, $$
whereas $\Omega^{(0)} (Y;\R)$ is just the set of all real-valued functions on $V$ and $\Omega^{(-1)} (Y;\R)$ is defined
as the set of all functions from $Y^{(-1)} = \{\varnothing\}$  to $\R$, which can be identified with $\R$.
The space $\Omega^{(j)} (Y;\R)$ is endowed with the inner product:
$$\langle f,g \rangle = \sum_{\sigma \in Y^{(j)}} w(\sigma) f(\sigma) g(\sigma), \ \mbox{for $f,g \in \Omega^{(j)} (Y;\R)$}, $$
where $w: Y \to (0,\infty)$ is a weight function.  

If $\sigma = [v_0,\ldots, v_j]$ is an oriented $j$-face and $v\in V$ not a member of $\sigma$, then  we set
$v\sigma = [v,v_0,\ldots, v_j]$. Furthermore, if $v\in V$ and $\sigma \in Y$, we write $v\sim \sigma$, if 
$\{ v\} \cup \sigma \in Y$ too. 
For $j=1,\ldots, d$, we define the $j$th \emph{boundary operator} $\partial_j : \Omega^{(j)} (Y;\R) \to \Omega^{(j-1)} (Y;\R)$: 
for $f \in \Omega^{(j)} (Y;\R)$ and $\sigma \in Y^{(j-1)}$ we set 
$$(\partial_j f) (\sigma) = \sum_{v:  v \sim \sigma} f(v\sigma). $$
It is well-known and easy to verify that for $j \geq 1$, we have $\partial_{j}\partial_{j+1} = 0$, whereby 
$\Im \partial_{j+1} \subseteq \Ker \partial_{j}$. We set $Z_j (Y)= \Ker \partial_{j}$ and $B_j (Y)= \Im \partial_{j+1}$. 
The set $Z_j (Y)$ is the set of $j$\emph{-cycles}, whereas $B_j(Y)$ is the set of $j$\emph{-boundaries}.

Note that both $Z_{j}, B_{j} \subseteq \Omega^{(j)} (Y;\R)$ and furthermore $(\Omega^{(j)} (Y;\R), \partial_j)_{j=1}^d$ is 
a chain complex. The group $H_j(Y;\R) = Z_j (Y)/B_j(Y)$ is the $j$th \emph{homology group} over $\R$. 

Similarly, one defines the $j$th \emph{coboundary operator} $\delta_{j}: \Omega^{(j)} \to \Omega^{(j+1)}$ as 
follows: if $\sigma= [v_0,\ldots, v_{j+1}]$ is an oriented $j+1$-face and $f \in \Omega^{(j)}$, then
$$(\delta_j f) (\sigma) = \frac{1}{w(\sigma)}\sum_{i=0}^{j+1} (-1)^i w(\sigma \setminus v_i) f(\sigma \setminus v_i), $$
where $\sigma \setminus v_i = [v_0,\ldots, v_{i-1}, v_{i+1},\ldots, v_{j+1}]$. 
It is not hard to show that $\Im \delta_{j-1} \subseteq \Ker \delta_{j}$
We set $Z^j (Y) = \Ker \delta_{j}$ (the set of closed $j$-forms) and $B^j (Y)= \Im \delta_{j-1}$ and 
$H^j (Y;\R) = Z^j(Y)/ B^j (Y)$, the $j$th \emph{cohomology group} over $\R$. 

A straightforward calculation shows that $\delta_{j-1}$ is the adjoint operator of $\partial_j$: 
for $f_1 \in \Omega^{(j-1)}(Y;\R)$ and $f_2 \in 
\Omega^{(j)}(Y;\R)$
\begin{equation} \label{eq:adjoint}
 \langle \delta_{j-1} f_1, f_2 \rangle = \langle f_1, \partial_j f_2\rangle.
 \end{equation}
 Note that $B^{j}(Y) = Z_{j}(Y)^{\perp}$ and $B_j(Y) = Z^j (Y)^\perp$ and
$$\Omega^{(d-1)} (Y;\R) = B^{d-1} (Y) \oplus Z_{d-1} (Y) = B_{d-1} (Y) \oplus Z^{d-1} (Y).$$ 
%Note that $B^{j}(Y) = Z_{j}(Y)^{\perp}$: 
%if $f \in \Im \delta_{j-1} = B^{j}(Y)$, then for any $f' \in \Ker \partial_j = Z_j (Y)$ we have 
%$$ \langle f, f'\rangle = \langle \delta_{j-1} g, f'\rangle = \langle g, \partial_{j} f'\rangle = \langle g, 0\rangle=0.$$

\subsubsection*{The Laplace operator and the spectral gap}
The  Laplace operator  associated with $Y$ is the operator 
$\Delta : \Omega^{(d-1)} (Y;\R) \to \Omega^{(d-1)} (Y;\R)$
defined as 
$$ \Delta = \Delta^+ + \Delta^-, $$
where 
$$\Delta^+ = \partial_d \delta_{d-1} \ (\mbox{upper Laplacian}), \ \mbox{and} \ \Delta^- = \delta_{d-2}\partial_{d-1} \ 
(\mbox{lower Laplacian}).$$
Note that~\eqref{eq:adjoint} implies that $\Ker \Delta^+ = Z^{d-1}(Y)$ whereas $\Ker \Delta^- = Z_{d-1}(Y)$.
The partial Laplace operators decompose the space $\Omega^{(d-1)} (Y; \R)$:
$$\Omega^{(d-1)} (Y;\R) = B^{d-1} (Y) \oplus Z_{d-1} (Y) = B_{d-1} (Y) \oplus Z^{d-1} (Y).$$ 

The subspace $\cH_{d-1} (Y)=\Ker \Delta$ is the called the space of \emph{harmonic $d-1$-forms}. 
Note that for any $f \in \cH_{d-1}(Y)$, the fact that $\delta_{j-1}$ is the adjoint of $\partial_j$ implies that 
$$\langle \partial_{d-1} f , \partial_{d-1} f \rangle, \langle \delta_{d-1} f , \delta_{d-1} f \rangle = 0,$$
whereby $f \in Z^{d-1} (Y) \cap Z_{d-1}(Y)$. Also, note that the definitions of $Z^{d-1}(Y), Z_{d-1}(Y)$ imply
that $Z^{d-1} (Y) \cap Z_{d-1}(Y) \subseteq \Ker \Delta$. Thus, in fact $\cH_{d-1} (Y)=Z^{d-1}(Y) \cap Z_{d-1} (Y)$.

The \emph{discrete Hodge decomposition} is due to Eckmann~\cite{ar:Eckmann44}:
\begin{eqnarray} \label{eq:Hodge}  
%&\overbrace{}^{Z^{d-1}(Y)}& \\
\Omega^{(d-1)} (Y;\R) &=&  B^{d-1}(Y) \oplus \underbrace{\cH_{d-1}(Y)  \oplus B_{d-1}(Y)}_{Z_{d-1}(Y)}.
\end{eqnarray}
It can be shown (see ~\cite{parzanchevski2016isoperimetric} p. 203) that 
\begin{equation} \label{eq:coh-isomorphism}
H^{d-1}(Y;\R) \cong \cH_{d-1}(Y) \cong H_{d-1} (Y;\R). 
\end{equation}
A quantity that is of interest is the \emph{spectral gap} $\lambda (Y)$ of a $d$-dimensional complex $Y$. 
This is defined as the minimal eigenvalue of the Laplacian or the upper Laplacian over $Z_{d-1}$ (the 
set of $d-1$-cycles). (Note that the two operators $\Delta^+$ and $\Delta$ coincide on $Z_{d-1}= \Ker \partial_{d-1}$.) We define
\begin{equation}\label{eq:gap-def}
\lambda (Y) := \min \Spec (\Delta |_{Z_{d-1}(Y)}) = \min \Spec (\Delta^+|_{Z_{d-1} (Y)}). 
\end{equation}
Horak and Jost~\cite{ar:JostHorrack} developed the theory of the Laplace operator for general weight 
functions. We will focus on special weighting schemes that give rise to generalisations of the well-studied
combinatorial Laplace operator as well as the normalised Laplace operator.

\subsubsection*{The combinatorial Laplace operator and the Cheeger constant} 

In the case where $w(\sigma ) = 1$ for all $\sigma \in Y$, the operator $\Delta^+$ is called the 
\emph{combinatorial (upper) Laplace operator} associated with $Y$. 
An algebraic manipulation can give explicitly the combinatorial Laplace operator: 
for $f \in \Omega^{(d-1)} (Y)$ and $\sigma = [v_0,\ldots, v_{d-1}] \in Y_{\pm}^{(d-1)}$, we have (as in (3.1) from
~\cite{parzanchevski2016isoperimetric}), taking $v_d = v$,
\begin{eqnarray*} 
(\Delta^+ f) (\sigma ) &=& \sum_{v  :  v\sigma \in Y_\pm^{(d)}} (\delta_{d-1}f) (v\sigma) = 
\sum_{v : v\sigma \in Y_\pm^{(d)}} \sum_{i=0}^d (-1)^i f(v \sigma \setminus v_i )  \\
&=& \sum_{v : v\sigma \in Y_\pm^{(d)}} \left( f(\sigma) - \sum_{i=0}^{d-1} (-1)^i f(v \sigma \setminus v_i ) \right) \\
&=& \dgr{\sigma} f(\sigma ) - \sum_{v : v\sigma \in Y_\pm^{(d)}} \sum_{i=0}^{d-1} (-1)^i f(v \sigma \setminus v_i ).
\end{eqnarray*}

If $Y$ is a finite simplicial complex with $|Y^{(0)}|=n$ vertices, we define 
\begin{equation} \label{eq:Cheeger_def}
h(Y) =\min_{|Y^{(0)}| = A_0 \uplus \cdots \uplus A_d} 
\frac{n \cdot |F(A_0,\ldots, A_d)|}{\prod_{i=0}^d |A_i|},
\end{equation}
where the minimum is taken over all partitions of $Y^{(0)}$ into $d+1$ non-empty parts $A_0, \ldots, A_d$
and $F(A_0, \ldots, A_d)$ is the set of $d$-faces with exactly one vertex in each one of the parts. 

The following theorem was proved by Parzanchevski, Rosenthal, and Tessler~\cite[Theorem 1.2]{parzanchevski2016isoperimetric}.
\begin{thm}
\label{thm:cheegerIneq}
For a finite complex $Y$ with a complete skeleton, 
$$\lambda(Y) \leq h(Y).$$
\end{thm}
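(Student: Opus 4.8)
The plan is to exhibit, for any partition $Y^{(0)} = A_0 \uplus \cdots \uplus A_d$ into nonempty parts, a test $d-1$-cycle $f \in Z_{d-1}(Y)$ whose Rayleigh quotient $\langle \Delta^+ f, f\rangle / \langle f, f\rangle$ equals exactly $h(A_0,\ldots,A_d) := n\,|F(A_0,\ldots,A_d)| / \prod_{i=0}^d |A_i|$. Since $\lambda(Y)$ is the minimum of the Rayleigh quotient over $Z_{d-1}(Y)$ by~\eqref{eq:gap-def}, and the right-hand side of~\eqref{eq:Cheeger_def} is the minimum of $h(A_0,\ldots,A_d)$ over partitions, this yields $\lambda(Y) \le h(Y)$. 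The natural candidate, mimicking the graph case where one uses (a projection of) the indicator vector of a part, is the function built from the $d+1$ indicators: define $g \in \Omega^{(d-1)}(Y;\R)$ on an oriented face $\sigma = [v_0,\ldots,v_{d-1}]$ by a signed sum involving $\prod$ of scaled indicators $\mathbf{1}_{A_i}$ across the parts, so that $g$ is supported essentially on faces meeting $d$ of the $d+1$ parts; then correct it to lie in $Z_{d-1}(Y)$.

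Concretely, here is the cleaner route I would take, following~\cite{parzanchevski2016isoperimetric}. For each $i$, let $\chi_i = \mathbf{1}_{A_i} - (|A_i|/n)\mathbf{1}_{Y^{(0)}} \in \Omega^{(0)}(Y;\R)$, the mean-zero indicator of $A_i$. Using the complete skeleton, form the product/wedge-type $d-1$-form $f$ whose value on $[v_0,\ldots,v_{d-1}]$ is (up to the $d!$ symmetrisation and sign) $\det\big(\chi_i(v_j)\big)_{0 \le i \le d-1,\, 0 \le j \le d-1}$ — i.e. $f = \delta_{d-2}\cdots$ applied to an alternating product of the $\chi_i$, or more directly $f = \chi_0 \wedge \cdots \wedge \chi_{d-1}$ restricted to $Y^{(d-1)}$. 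One checks (i) $f \in Z_{d-1}(Y)$, i.e. $\partial_{d-1} f = 0$: this follows because each $\chi_i$ has total sum zero, so summing $f(v\sigma)$ over all $v$ telescopes to $0$; (ii) $\langle f, f\rangle = \prod_{i=0}^{d-1}|A_i|\cdot(\text{explicit factor})$ — a determinant expansion computation using $\sum_v \chi_i(v)\chi_j(v) = |A_i|\delta_{ij} - |A_i||A_j|/n$ and the $d$-part structure; (iii) $\langle \Delta^+ f, f\rangle = \langle \delta_{d-1}f, \delta_{d-1}f\rangle$ equals $|F(A_0,\ldots,A_d)|$ times the matching explicit factor, because $(\delta_{d-1}f)([v_0,\ldots,v_d])$ is a $(d{+}1)\times(d{+}1)$ determinant of the $\chi_i(v_j)$ which is nonzero exactly when the $d+1$ vertices hit all $d+1$ parts, and then has absolute value $\prod|A_i|/n \cdot (\cdots)$ in a way that the normalising factors cancel. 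Dividing (iii) by (ii) leaves precisely $n\,|F(A_0,\ldots,A_d)| / \prod_{i=0}^d |A_i|$.

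The main obstacle is the bookkeeping in steps (ii) and (iii): correctly normalising the alternating product so that the combinatorial factors (the $d!$'s from symmetrising over orderings of a face, and the powers of $n$ from the mean-zero corrections) cancel exactly rather than up to a constant, and verifying that $(\delta_{d-1}f)(\tau)$ vanishes for $(d{+}1)$-sets $\tau$ that miss a part while contributing the right magnitude on transversal $d$-faces. I would handle this by choosing the weights to be $w\equiv 1$ from the start (combinatorial case, as in the theorem's hypothesis), writing $f$ explicitly as a determinant, and invoking the Cauchy–Binet formula to evaluate both inner products: $\langle f,f\rangle$ and $\langle \delta_{d-1}f,\delta_{d-1}f\rangle$ become sums over transversals of products of $2\times 2$-type Gram entries of the $\chi_i$, and the ratio collapses to the claimed expression. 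The requirement of a complete skeleton is used to guarantee that $f$ is well-defined on \emph{all} transversal $d-1$-faces (so that the $\partial_{d-1}f=0$ telescoping is not obstructed) and that $|F(A_0,\ldots,A_d)|$ counts genuine $d$-faces of $Y$.
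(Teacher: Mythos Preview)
The paper does not give its own proof of Theorem~\ref{thm:cheegerIneq}; it simply quotes it as Theorem~1.2 of Parzanchevski, Rosenthal and Tessler~\cite{parzanchevski2016isoperimetric}. Your proposal is correct and is precisely the argument carried out in that reference: for a partition $A_0,\dots,A_d$ one sets $f([v_0,\dots,v_{d-1}])=\det\big(\chi_i(v_j)\big)_{0\le i,j\le d-1}$ with $\chi_i=\mathbf{1}_{A_i}-\tfrac{|A_i|}{n}\mathbf{1}$, checks $f\in Z_{d-1}$ by expanding along the column indexed by the free vertex and using $\sum_v\chi_i(v)=0$, computes $(\delta_{d-1}f)(\tau)=\pm\det(\mathbf{1}_{A_i}(v_j))_{0\le i,j\le d}\in\{0,\pm1\}$ so that $\langle\delta_{d-1}f,\delta_{d-1}f\rangle=|F(A_0,\dots,A_d)|$, and evaluates $\langle f,f\rangle$ via Cauchy--Binet and the matrix determinant lemma to get $\tfrac{1}{n}\prod_{i=0}^d|A_i|$; the ratio is exactly $h(A_0,\dots,A_d)$. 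Your remark on the role of the complete skeleton (needed both for the telescoping in $\partial_{d-1}f=0$ and for the Cauchy--Binet computation of $\langle f,f\rangle$ to range over all $d$-subsets) is also accurate.
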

Furthermore, the authors also derive an expander mixing lemma for complexes with complete skeleton. 
This assumption was removed by Parzanchevski~\cite{ar:Parzan17}. 

In~\cite{parzanchevski2016isoperimetric}, the authors discuss the existence of a lower bound in the spirit 
of the lower bound in~\eqref{eq:Laplace-Cheeger-graphs}. They observe (cf. Section 4.2 in~\cite{parzanchevski2016isoperimetric}), that a bound of the form $C \cdot h(Y)^m \leq \lambda (Y)$, 
for some $C, m>0$ cannot hold, providing as counterexample the minimal triangulation of a M\"obius strip, 
which has $\lambda (Y)=0$ but $h(Y) >0$. 

Parzanchevski et al.~\cite{parzanchevski2016isoperimetric} conjecture that an inequality of the form 
$C \cdot h(Y)^2 - c \leq \lambda (Y)$ should hold, where $C, c>$ depend on the maximum degree of any 
$d-1$-face of $Y$ as well as on the dimension of $Y$. 

Furthermore, they showed~\cite{parzanchevski2016isoperimetric} that for $D>0$ there exists $\gamma$ 
such that $\gamma = O(\sqrt{D})$, as  $D \to \infty$, such that 
if $np = D \cdot \log n$, then w.h.p. $\Spec (\Delta^+|_{Z_{d-1}}) \subset [(D-\gamma) \log n, (D+\gamma) \log n]$. This implies that w.h.p. $\lambda (Y(n,p;d)) = (D \pm O(\sqrt{D})) \log n$ if $np = D \log n$ and $D$ is sufficiently large.

Our results strengthen the latter, showing that if $np = (1+ \eps)d \log n$ and $\eps >0$ is fixed, the upper bound $\lambda (Y(n,p;d)) \leq h(Y(n,p;d))$ which follows from Theorem~\ref{thm:cheegerIneq} becomes tight in 
that $\lambda (Y(n,p;d)) = h(Y(n,p;d)) (1+o_p(1))$. 
Furthermore, we show that $\lambda (Y(n,p;d))  / np$ converges in probability as $n\to \infty$ 
to a certain constant which depends on $\eps$ and $d$.  
Recall that $\delta (Y(n,p;d))$ denotes the minimum co-degree among all $d-1$-dimensional faces of $Y(n,p;d)$. 
\begin{thm} \label{thm:Cheeger_mincodeg} Let $p = \frac{(1+\eps)d \log n}{n}$, where $\eps >0$ is fixed. There exists $C>0$ such that w.h.p. 
$$\delta (Y(n,p;d)) - C \sqrt{\log n}\leq  \lambda (Y(n,p;d) ) \leq h( Y(n,p;d)) \leq \delta (Y(n,p;d)).  $$
Furthermore, w.h.p.
$$ |\delta (Y(n,p;d)) - (1+\eps )a d \log n| < C \sqrt{\log n}, $$
where $a = a(\eps)$ is the solution to 
$$ \eps = (1+\eps) (1- \log a ) a. $$
\end{thm}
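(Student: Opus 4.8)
The statement decomposes into three largely independent pieces: the upper bound $\lambda\le h\le\delta$, the matching lower bound $\lambda\ge\delta-C\sqrt{\log n}$, and the concentration of the minimum co-degree $\delta$. I would establish each separately and then combine them.

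For the \emph{upper bound}, the inequality $\lambda(Y)\le h(Y)$ is immediate from Theorem~\ref{thm:cheegerIneq}, since $Y(n,p;d)$ has a complete skeleton. For the bound by $\delta(Y)$ I would exhibit an explicit partition. Let $\sigma=\{u_1,\dots,u_d\}$ be a $(d-1)$-face realising the minimum co-degree, put $A_i=\{u_i\}$ for $1\le i\le d$, and $A_0=Y^{(0)}\setminus\sigma$. Since $A_1,\dots,A_d$ are singletons, a $d$-face meeting each $A_i$ in one vertex must contain all of $u_1,\dots,u_d$ together with one vertex of $A_0$; hence $F(A_0,\dots,A_d)$ is exactly the set of $d$-faces through $\sigma$, so $|F(A_0,\dots,A_d)|=\deg(\sigma)=\delta(Y)$ and $\prod_{i=0}^d|A_i|=n-d$. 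By \eqref{eq:Cheeger_def} this gives $h(Y)\le\frac{n}{n-d}\delta(Y)=\delta(Y)(1+o(1))$, which in particular is at most $\delta(Y)+C\sqrt{\log n}$; this already suffices to localise $\lambda$, and I would not chase the sharp constant (a minimal triangulation of the M\"obius strip shows $h\le\delta$ cannot be fully deterministic).

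For the \emph{lower bound} $\lambda(Y)\ge\delta(Y)-C\sqrt{\log n}$ I would work with the combinatorial upper Laplacian directly. The key structural point is that, because $Y$ has complete $(d-1)$-skeleton, the space $Z_{d-1}(Y)=\Ker\partial_{d-1}$ does not depend on $p$. Using the explicit formula for $\Delta^+$ recorded before \eqref{eq:Cheeger_def}, write $\Delta^+_Y=D_Y-M_Y$, where $D_Y=\diag(\deg\sigma)_{\sigma\in Y^{(d-1)}}$ and $M_Y$ is the signed co-adjacency operator. A short computation for the complete $d$-complex on $[n]$ shows $(\Delta^+_{\mathrm{full}}f)(\sigma)=n\,f(\sigma)+(\text{terms linear in }\partial_{d-1}f)$, so $\Delta^+_{\mathrm{full}}|_{Z_{d-1}}=n\cdot\mathrm{Id}$; since $D_{\mathrm{full}}=(n-d)\cdot\mathrm{Id}$ this forces $M_{\mathrm{full}}|_{Z_{d-1}}=-d\cdot\mathrm{Id}$. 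As $\mathbb E[M_Y]=p\,M_{\mathrm{full}}$, for every $f\in Z_{d-1}(Y)$ we get
\[
\langle\Delta^+_Yf,f\rangle=\langle D_Yf,f\rangle-p\langle M_{\mathrm{full}}f,f\rangle-\langle(M_Y-pM_{\mathrm{full}})f,f\rangle\ge\big(\delta(Y)+pd-\|M_Y-pM_{\mathrm{full}}\|_{\mathrm{op}}\big)\|f\|^2,
\]
using $\langle D_Yf,f\rangle\ge\delta(Y)\|f\|^2$ and $\langle M_{\mathrm{full}}f,f\rangle=-d\|f\|^2$. It remains to invoke the spectral-norm estimate $\|M_Y-pM_{\mathrm{full}}\|_{\mathrm{op}}=O(\sqrt{np})=O(\sqrt{\log n})$ w.h.p., which is of the same type as the eigenvalue concentration results for random complexes in \cite{gundert2016eigenvalues,hoffman2012spectral} (and can also be obtained by a direct matrix-concentration/trace argument). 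This yields $\lambda(Y)=\min\Spec(\Delta^+_Y|_{Z_{d-1}})\ge\delta(Y)-C\sqrt{\log n}$.

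For the \emph{concentration of $\delta(Y(n,p;d))$}, fix a $(d-1)$-face $\sigma$; its co-degree is $\mathrm{Bin}(n-d,p)$ with mean $\mu=(n-d)p=np(1+o(1))$. I would use the sharp binomial lower-tail asymptotics with additive precision in the exponent: for fixed $x\in(0,1)$, $\PIdx{}{\mathrm{Bin}(n-d,p)\le x\mu}=\Theta(\mu^{-1/2})\exp(-\mu\,g(x)+o(1))$ with $g(x)=x\ln x-x+1$, so that, since $\mu=(1+\eps)d\log n(1+o(1))$, this probability is $n^{-(1+\eps)d\,g(x)+o(1)}/\sqrt{\log n}$. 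A union bound over the $\binom{n}{d}$ faces shows that w.h.p.\ every co-degree exceeds $a\mu-C\sqrt{\log n}$, because $g$ is decreasing and $(1+\eps)d\,g(x)>d$ for $x<a$; conversely, the second-moment method shows w.h.p.\ some face has co-degree at most $a\mu+C\sqrt{\log n}$, since the expected number of such faces tends to infinity and the co-degrees of two $(d-1)$-faces are independent unless the faces share $d-1$ vertices, the $O(n^{d+1})$ weakly dependent pairs contributing only a lower-order term to the variance (Chebyshev then applies). Both thresholds meet precisely when $(1+\eps)g(a)=1$, i.e.\ $\tfrac{\eps}{1+\eps}=a(1-\ln a)$, which is the stated equation $\eps=(1+\eps)(1-\log a)a$, with unique root $a\in(0,1)$. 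Since $a\mu=(1+\eps)ad\log n+o(1)$, this gives $|\delta(Y(n,p;d))-(1+\eps)ad\log n|<C\sqrt{\log n}$ w.h.p., and combining with the two bounds above finishes the proof.

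\textbf{Main obstacle.} The crux is the lower bound on $\lambda$: the spectral-norm estimate $\|M_Y-pM_{\mathrm{full}}\|_{\mathrm{op}}=O(\sqrt{np})$ at the borderline sparsity $np=\Theta(\log n)$ is the most delicate input, and one must propagate the $\sqrt{\log n}$ error term cleanly through both that step and the sharp (additive-exponent) binomial tail estimate used for the co-degree concentration.
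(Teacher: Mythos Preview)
Your proposal is essentially correct, and the upper-bound and co-degree concentration parts match the paper's approach exactly (explicit partition through a minimum-degree face; sharp binomial tail plus a union bound for the lower tail, and a second-moment computation exploiting that co-degrees are independent unless two $(d-1)$-faces share $d-1$ vertices). Your observation that the partition only gives $h(Y)\le\frac{n}{n-d}\delta(Y)$ rather than $h(Y)\le\delta(Y)$ on the nose is accurate; this is an $o(1)$ discrepancy absorbed by the $C\sqrt{\log n}$ window.

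The lower bound on $\lambda$ is where your route genuinely diverges from the paper. You work globally: write $\Delta^+_Y=D_Y-M_Y$, use $M_{\mathrm{full}}|_{Z_{d-1}}=-d\cdot\mathrm{Id}$, and then invoke a spectral-norm estimate $\|M_Y-pM_{\mathrm{full}}\|_{\mathrm{op}}=O(\sqrt{np})$ for the centred signed $(d-1)$-adjacency. The paper instead uses Garland's localisation (Theorem~\ref{thm:garland}): it decomposes $\Delta^+$ as $\sum_{\tau\in Y^{(d-2)}}\Delta^+_\tau-(d-1)D$, rewrites each summand in terms of the ordinary graph Laplacian of the link $\lk\tau\sim G(n-d+1,p)$, and then bounds $\langle A_{\lk\tau}f_\tau,f_\tau\rangle$ for $f_\tau\perp\mathbf{1}$ using Feige--Ofek's result on the adjacency of $G(n,p)$ (Theorem~\ref{thm:adj_Gnp}), union-bounded over the $O(n^{d-1})$ links. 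The advantage of the paper's approach is that Feige--Ofek is stated for \emph{any} $np\ge c_0\log n$, so the reduction to graphs gives the needed $O(\sqrt{np})$ bound directly at $np=(1+\eps)d\log n$ for every fixed $\eps>0$. By contrast, the high-dimensional spectral bound you cite from \cite{gundert2016eigenvalues} is stated there for $np\ge C\log n$ with $C$ a sufficiently large absolute constant, so to make your argument fully self-contained at the sharp threshold you would still need either an $\eps$-net/trace argument tailored to the signed adjacency (as you indicate), or in effect to reprove Feige--Ofek in the simplicial setting. Your method is more direct and conceptually cleaner; the paper's buys an off-the-shelf black box at the cost of the Garland machinery.
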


The above theorem not only strengthens the results of Parzanchevski et al.~\cite{parzanchevski2016isoperimetric} 
as far as the range of $p$ is concerned, but it also gives more precise asymptotics for large $\eps$. 
Remark 1.2 in~\cite{kolokolnikov2014algebraic} states that as $\eps \to \infty$, $a (\eps )= 1 - \sqrt{\frac{2}{(1+\eps)d}} + O\left( \frac{1}{\eps} \right)$. 
Hence, if we write $D = (1+\eps )d$, then for any $D$ sufficiently large we have w.h.p
$$\lambda (Y(n,p;d)), h(Y(n,p;d)) = \left( D - \sqrt{2 D} + O(1) \right)\log n + O(\sqrt{\log n}). $$

The proof of the above theorem has three parts. We start with the result on $\delta (Y(n,p;d))$ 
in Section~\ref{sec:min_codeg} (cf. Lemma~\ref{lem:minDegree}). Thereafter, in Section~\ref{sec:cheeger}
we show that $h(Y(n,p;d)) \leq \delta (Y(n,p;d))$  (cf. Theorem~\ref{thm:mainCheeger}). 
Hence, the upper bound on $\lambda (Y(n,p;d))$ follows from Theorem~\ref{thm:cheegerIneq}. 

For the lower bound on $\lambda (Y(n,p;d))$ in Section~\ref{sec:laplacians} we follow an approach similar to that of Gundert and Wagner~\cite{gundert2016eigenvalues}.
The lower bound is derived through a decomposition, essentially due to Garland~\cite{garland1973p}, of the Laplace operator $\Delta^+$ of a simplicial complex $Y$ into the sum of the (combinatorial) graph Laplace operators of the link graphs defined by the $d-2$-faces of $Y$. We show that the positive eigenvalues of these are bounded from below by $\delta (Y)$ and hence the lower bound in Theorem~\ref{thm:Cheeger_mincodeg}. 

However, this approximation incurs a term which involves the adjacency matrix of these link graphs. As we will see in Section~\ref{sec:laplacians}, in the case where $Y$ is $Y(n,p;d)$ these graphs are distributed as $G(n-d+1,p)$. 
At this point we use sharp results of Feige and Ofek~\cite{ar:FeigeOfek2005} to show that this term has no 
essential contribution. 

\subsection{The normalised Laplace operator} 
Under a weighting scheme where 
\begin{equation} \label{eq:normalised_weights}
w(\sigma ) = \begin{cases} 1 & \mbox{if $\sigma \in Y \setminus Y^{(d-1)}$} \\
\frac{1}{\dgr{\sigma}} & \mbox{if $\sigma \in Y^{(d-1)}$} 
 \end{cases}, 
 \end{equation}
 the operator $\Delta^+$ is called the \emph{normalised Laplace operator} associated with $Y$. 
 An explicit calculation as above  (cf. (2.6) in~\cite{ar:ParzRosen2017}) shows that for 
 $f \in \Omega^{(d-1)} (Y;\R)$
 and $\sigma = [v_0,\ldots, v_{d-1}] \in Y_\pm^{(d-1)}$ we have 
 \begin{eqnarray*} 
 (\Delta^+ f) (\sigma ) &=& f(\sigma) - \sum_{v : v\sigma \in Y_\pm^{(d)}} \sum_{i=0}^{d-1} \frac{(-1)^i f(v\sigma \setminus v_i)}{\dgr{v\sigma \setminus v_i}} \\
 &=& f(\sigma) - \sum_{\sigma' : \sigma'\sim \sigma} \frac{f(\sigma')}{\dgr{\sigma'}},
 \end{eqnarray*}
where for two oriented faces $\sigma, \sigma' \in Y_{\pm}^{(d-1)}$, we write $\sigma \sim \sigma'$ if there exists 
$\rho \in Y_\pm^{(d)}$ such that $\sigma, \overline{\sigma'} \in \partial \rho$. 
%Obviously, $\sigma, v\sigma \setminus v_i \in \partial v\sigma$

For graphs, the normalised Laplace operator acts on functions on the vertex set of a graph $G=(V,E)$ and is defined  
as $\mathcal{L}_G= D_G^{-1/2} \Delta^+ D_G^{-1/2} = I - D_G^{-1/2} A_G D_G^{-1/2}$, where $I$ is the identity operator. However, note that for $d=1$ the definition of $\Delta^+$ yields the operator $\Delta^+= I - D_G^{-1} A_G$. 
This has the same spectrum as $\mathcal{L}_G$, provided that $\dgr{\sigma}>0$, for all $\sigma \in Y^{(0)}$. 
Furthermore, the constant function on $V$ is an eigenfunction corresponding to eigenvalue 0, whereas all other eigenvalues are positive. 

Gundert and Wagner~\cite{gundert2016eigenvalues} showed that w.h.p. the non-trivial eigenvalues of 
the normalised Laplacian of $Y(n,p;d)$ are close to $1$, for $p$ such that $np \geq  C \log n$. This implies that 
$H^{d-1}(Y(n,p;d); \R)$ is trivial for such $p$.  
Hoffman, Kahle and Paquette~\cite{hoffman2012spectral} extended this argument for $p$ such that 
$n p \geq (1/2 + \delta) \log n$, showing that 
w.h.p. all non-trivial eigenvalues are within $C/\sqrt{np}$ from 1.
% (see Theorem~\ref{thm:hoffman} in Section~\ref{sec:norm}). 

The argument of Gundert and Wagner~\cite{gundert2016eigenvalues} relies on proving the sharp concentration of the non-trivial eigenvalues of the normalised Laplacian of $G(n,p)$ around 1. Hence, the sharpening of Hoffman et al.~\cite{hoffman2012spectral} follows from their main result about the eigenvalues of the Laplacian of $G(n,p)$ for any $p$ such that $np \geq (1/2 + \delta) \log n$, for arbitrary fixed $\delta>0$.  
For the denser regime where $np =\Omega (\log^2 n)$, this was proved by Chung, Lu and Vu~\cite{ar:ChungLuVu2003}. However, for sparser regimes ($np$ bounded) this fact has been proved by Coja-Oghlan~\cite{ar:Coja-Oghlan2007} for the Laplace operator restricted on core of $G(n,p)$, although for $G(n,p)$
itself the spectral gap is $o_p(1)$.

\subsection{Random walks on $Y(n,p;d)$ and expansion}  

This part of the paper is motivated by the notion of a random walk on $Y$ introduced by Parzanchevski and Rosenthal~\cite{ar:ParzRosen2017}. 
This is in fact a random walk on $Y^{(d-1)}_\pm$ and, more precisely, on the
graph $(Y^{(d-1)}_\pm, E^{(d-1)}_\pm)$, where $\sigma \sigma' \in  E^{(d-1)}_\pm$ if and only if $\sigma \sim \sigma'$, for distinct $\sigma, \sigma'$. 

For example, if $Y$ is 2-dimensional complex, then this is a walk on the oriented edges (1-faces) of $Y$. 
If $[v,u]$ is such a face, then the walk can move to any edge $[v',u]$ or $[v,v']$ provided that $[v,u,v'] \in Y_\pm^{(2)}$.

One may consider the projection of such a walk on $Y^{(d-1)}$. 
For distinct $\sigma, \sigma' \in Y^{(d-1)}$, we also write $\sigma \sim \sigma'$, if there exists $\rho \in Y^{(d)}$ such that both $\sigma, \sigma' \subset \rho$. Suppose that for all $\sigma \in Y^{(d-1)}$ we have $\dgr{\sigma}>0$. 

If $(X_0,X_1,\ldots)$ denotes this Markov chain, 
then for any $n\geq 1$ the transition probabilities are 
$\Prob{X_n = \sigma' \mid X_{n-1}=\sigma} = \frac{1}{d \cdot \dgr{\sigma}}$, provided that 
$\sigma \sim \sigma'$; otherwise 
$\Prob{X_n = \sigma' \mid X_{n-1}=\sigma} = 0$. 

 In a more general setting, one may consider a 
\emph{$\gamma$-lazy} version of this random walk, for $\gamma \in (0,1)$, 
where $\Prob{X_n = \sigma \mid X_{n-1}=\sigma} = \gamma$
and $\Prob{X_n = \sigma' \mid X_{n-1}=\sigma} = \frac{1-\gamma}{d \cdot \dgr{\sigma}}$, for $\sigma \sim \sigma'$.

In this Markov chain, the stationary distribution on $Y^{(d-1)}$, denoted by $\pi$, is such that $\pi (\sigma)$ 
is proportional to $\dgr{\sigma}$. Note that $\sum_{\sigma \in Y^{(d-1)}}\dgr{\sigma} = (d+1) \cdot |Y^{(d)}|$.  
For any $\sigma \in Y^{(d-1)}$ we have $\pi (\sigma) = \frac{\dgr{\sigma}}{(d+1) \cdot |Y^{(d)}|}$.

We consider the mixing of such a random walk in the case where $Y$ is $Y(n,p;d)$
with $np \geq (1+\eps) d\log n$. In particular, we will consider the \emph{conductance} of this Markov chain
which we denote by $\Phi_{Y}$. 
First for any non-empty proper subset $S \subset Y^{(d-1)}$ we define
$$\Phi_Y(S) = \frac{Q(S,\overline{S})}{\pi (S) \pi (\overline{S})},$$
where $\overline{S}= Y^{(d-1)} \setminus S$ and $Q(S,\overline{S}) = \sum_{\sigma \in S} \sum_{\sigma' \in \overline{S} : \sigma' \sim \sigma} \pi(\sigma) \cdot \frac{1}{d \cdot \dgr{\sigma}}$ and 
$\pi (S) = \sum_{\sigma \in S} \pi (\sigma )$. 
The conductance $\Phi_Y$ is defined as 
$$\Phi_Y = \min_{S\subset Y^{(d-1)} :  0< \pi (S) \leq 1/2} \Phi_Y(S). $$
Note that if $np> (1+\eps) d \log n$, then a first moment argument shows that w.h.p. $\dgr{\sigma} > 0$, for all 
$\sigma \in Y^{(d-1)} (n,p;d)$.
We show that w.h.p. the conductance of $Y(n,p;d)$ is bounded away from 0. 
\begin{thm}
\label{thm:conductance}
Let $Y = Y(n,p;d)$ where $np = (1+\eps) d \log n$ and $\eps >0$ is fixed. 
Then there exists $\delta > 0$ such that w.h.p.
$$\Phi_{Y(n,p;d)} > \delta.$$
\end{thm}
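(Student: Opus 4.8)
The plan is to recognise the walk as a simple random walk on a graph and reduce the statement to edge expansion of that graph. Since the laziness parameter only rescales $\Phi_Y$ by the factor $1-\gamma$, take $\gamma=0$; the chain is then the simple random walk on the graph $G=\bigl(Y^{(d-1)},\sim\bigr)$, in which $\sigma\sim\sigma'$ iff $\sigma\cup\sigma'\in Y^{(d)}$ (so $\deg_G(\sigma)=d\,\dgr{\sigma}$). Writing $\mathrm{vol}(S)=\sum_{\sigma\in S}\dgr{\sigma}$ and $M=\mathrm{vol}(Y^{(d-1)})=(d+1)|Y^{(d)}|$, one has $\pi(\sigma)=\dgr{\sigma}/M$ and $\pi(\sigma)\cdot\frac1{d\,\dgr{\sigma}}=\frac1{dM}$, hence $Q(S,\overline S)=e_G(S,\overline S)/(dM)$ and
$$\Phi_Y(S)=\frac Md\cdot\frac{e_G(S,\overline S)}{\mathrm{vol}(S)\,\mathrm{vol}(\overline S)}\ \ge\ \frac{e_G(S,\overline S)}{d\,\mathrm{vol}(S)}\qquad\text{whenever }\pi(S)\le\tfrac12 ,$$
using $\mathrm{vol}(S)+\mathrm{vol}(\overline S)=M$. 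Classifying each $\rho\in Y^{(d)}$ by the number $k_\rho$ of its $(d-1)$-faces lying in $S$, and using that two $(d-1)$-faces are $G$-adjacent exactly when their union is a present $d$-face, we get $e_G(S,\overline S)=\sum_{\rho\in Y^{(d)}}k_\rho(d+1-k_\rho)$ and $\mathrm{vol}(S)=\sum_{\rho\in Y^{(d)}}k_\rho$; dropping from the numerator the terms with $k_\rho\in\{0,d+1\}$ and using $d+1-k_\rho\ge1$ for $1\le k_\rho\le d$,
$$\Phi_Y(S)\ \ge\ \frac1d\Bigl(1-\frac{(d+1)\,|\mathcal F_S|}{\mathrm{vol}(S)}\Bigr),\qquad \mathcal F_S:=\{\rho\in Y^{(d)}:\text{every }(d-1)\text{-face of }\rho\text{ lies in }S\}.$$
(For $d=1$ this is the classical lower bound on the conductance of $G(n,p)$ above the connectivity threshold.) So the theorem reduces to showing that w.h.p.\ every $S$ with $0<\mathrm{vol}(S)\le M/2$ satisfies $(d+1)|\mathcal F_S|\le(1-\delta_0)\mathrm{vol}(S)$ for some constant $\delta_0=\delta_0(\eps,d)>0$ -- i.e.\ no bounded-volume $S$ ``internalises'' almost all of its own co-degree.

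I would prove this by a union bound organised according to the vertex support $W=W(S):=\bigcup_{\sigma\in S}\sigma$. The combinatorial inputs are the deterministic bounds $|\mathcal F_S|\le|\mathcal P_S|$ and $(d+1)|\mathcal P_S|\le(|W|-d)\,|S|$, where $\mathcal P_S$ denotes the set of $(d+1)$-sets all of whose $d$-subsets lie in $S$, together with $\mathcal P_S\subseteq\binom{W}{d+1}$; the probabilistic input is that $|\mathcal F_S|\sim\mathrm{Bin}(|\mathcal P_S|,p)$ and $|\{\rho\in Y^{(d)}:\rho\subseteq W\}|\sim\mathrm{Bin}\bigl(\binom{|W|}{d+1},p\bigr)$. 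Conditioning on the event -- w.h.p.\ by Lemma~\ref{lem:minDegree} and its upper-tail counterpart -- that every co-degree lies between $c_1\log n$ and $C_1\log n$ (so $\mathrm{vol}(S)=\Theta(|S|\log n)$), the target inequality follows in three regimes: \emph{deterministically} when $|W|=O(\log n)$, from $(d+1)|\mathcal P_S|\le(|W|-d)|S|$ and $\mathrm{vol}(S)\ge|S|\delta(Y)$; by a Chernoff upper-tail estimate over the $\le\binom n{|W|}2^{\binom{|W|}d}$ choices of $(W,S)$ in the intermediate range; and, for $|W|=\Theta(n)$, by a union bound over the $\le 2^n$ choices of $W$, using that $p\binom{|W|}{d+1}\gg n$. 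Alternatively, in the range $|W|=\Theta(n)$ one can bypass this via the Garland-type decomposition of Section~\ref{sec:laplacians}: $G$ is the edge-disjoint union over $(d-2)$-faces $\eta$ of link graphs $\lk_Y(\eta)\cong G(n-d+1,p)$, each above its own connectivity threshold and hence -- by the estimates of Feige and Ofek~\cite{ar:FeigeOfek2005} (cf.\ Kolokolnikov et al.~\cite{kolokolnikov2014algebraic}) and a union bound over the $\binom n{d-1}$ links -- of conductance $\ge\phi_0(\eps)>0$; summing $e_G(S,\overline S)=\sum_\eta e_{\lk_Y(\eta)}(S_\eta,\overline{S_\eta})\ge\phi_0\sum_\eta\min\{\mathrm{vol}_\eta(S_\eta),\mathrm{vol}_\eta(\overline{S_\eta})\}$ against $d\,\mathrm{vol}(S)=\sum_\eta\mathrm{vol}_\eta(S_\eta)$ reduces matters to controlling the links that are ``heavy'' for $S$.

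The main difficulty lies exactly there: in the Garland picture it is the ``super-heavy'' links $\eta$ for which $S$ contains almost the entire link-neighbourhood of $\eta$, so that $\eta$ contributes its full weight to $\mathrm{vol}(S)$ but only the possibly tiny quantity $\mathrm{vol}_\eta(\overline{S_\eta})$ to the edge boundary; in the combinatorial picture these are exactly the configurations that make $|\mathcal F_S|$ large. What saves the argument is the global constraint $\pi(S)\le1/2$: having many super-heavy links (or a near-complete $\mathcal F_S$) forces $\mathrm{vol}(S)>M/2$. One quantifies this by combining $\sum_\eta\mathrm{vol}_\eta(V(\lk_Y(\eta)))=dM$ with $\sum_\eta\mathrm{vol}_\eta(S_\eta)=d\,\mathrm{vol}(S)\le dM/2$, noting that \emph{mildly} heavy links already contribute a constant fraction of their $S$-weight to the boundary, and bounding the weight carried by the super-heavy links via the $|W|$-organised union bound above. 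The delicate point throughout is that the relevant margins are all of order $\log n$ and the constants ($\phi_0(\eps)$, the co-degree window, the heavy/light cut-off) all deteriorate as $\eps\to0$; they must be tracked so as to remain simultaneously positive for every fixed $\eps>0$, which is what yields $\Phi_{Y(n,p;d)}>\delta$ for a suitable $\delta=\delta(\eps,d)>0$.
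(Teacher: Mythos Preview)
Your reduction is sound and matches the paper's: interpret the walk as SRW on the graph on $Y^{(d-1)}$, obtain
\[
\Phi_Y(S)\ \ge\ \frac1d\Bigl(1-\frac{(d+1)|\mathcal F_S|}{\mathrm{vol}(S)}\Bigr),
\]
and reduce to showing that ``internal'' $d$-faces cannot absorb almost all of the co-degree of any $S$ with $\pi(S)\le1/2$. Where your argument diverges from the paper is in the union-bound bookkeeping, and here there are two genuine gaps.

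\textbf{The intermediate and large $|W|$ regimes do not close.} For a set $S$ with $|S|=m$ and support $W$, the upper-tail Chernoff on $|\mathcal F_S|\sim\mathrm{Bin}(|\mathcal P_S|,p)$ gives at best $\exp(-\Theta(m\log n))$, while the number of pairs $(W,S)$ you propose to union over is $\binom{n}{|W|}2^{\binom{|W|}{d}}$. Since $|W|\le d\,m$, the factor $2^{\binom{|W|}{d}}$ can be $\exp(\Theta(m^d))$, which swamps $\exp(-\Theta(m\log n))$ as soon as $m\gg(\log n)^{1/(d-1)}$. In the $|W|=\Theta(n)$ regime you only union over $W$ and replace $|\mathcal F_S|$ by $|Y^{(d)}\cap\binom{W}{d+1}|$; this quantity is of order $p\binom{|W|}{d+1}=\Theta(n^{d}\log n)$, whereas $\mathrm{vol}(S)$ can be as small as $\Theta(n\log n)$ (when $|S|=|W|/d=\Theta(n)$), so the ratio blows up.

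\textbf{The lower bound $\mathrm{vol}(S)\ge|S|\,\delta(Y)$ is too weak for large $S$.} Since $\delta(Y)\sim a(\eps)\,np$ with $a(\eps)\to0$ as $\eps\to0$, your target $(1-\delta_0)\,|S|\,\delta(Y)/(d+1)$ can lie \emph{below} the mean of $|\mathcal F_S|$: by Kruskal--Katona (which you do not invoke), $(d+1)\mathbb E|\mathcal F_S|\le 2^{-1/d}|S|\,np$, and for $a(\eps)<2^{-1/d}$ the upper-tail bound on $|\mathcal F_S|$ is vacuous. The paper avoids this by never passing through $\delta(Y)$: it compares the two binomials $|(\partial^+S\setminus B_S)\cap Y^{(d)}|$ and $|\partial^+S\cap Y^{(d)}|$ directly, whose expectations differ by the factor $1-2^{-1/d}$ uniformly.

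The paper's remedies for both points are: (i) bound $|\mathcal P_S|$ by the Lov\'asz form of Kruskal--Katona in terms of $m=|S|$ alone (not $|W|$), which makes the ``large $m$'' union bound over all $\binom{\binom{n}{d}}{m}$ sets go through; and (ii) for small $m$, restrict to \emph{tightly connected} $S$---there are only $\exp((1+o(1))m\log n)$ of these by a BFS count---and then recover general $S$ by decomposing into its tightly connected pieces (their upper shadows are disjoint). Your Garland alternative is also incomplete, as you yourself note: the super-heavy links are exactly the obstruction, and you route their treatment back to the $|W|$-organised union bound that has the problems above.
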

The first author and Reed~\cite{ar:FounReed2008} showed the analogous result for largest connected component 
of $G(n,p)$ when $np = \Omega (\log n)$. 

We prove Theorem~\ref{thm:conductance} in Section~\ref{sec:conductance}. Its proof is based on a double counting argument that is facilitated by a weak version of the Kruskal-Katona theorem (cf. Theorem~\ref{thm:kk}).

\subsubsection{Tools: concentration inequalities}
%\normalem

%Let $\eps > 0$ and let $p \geq (1+\eps)  \frac{d \log n}{n}$. Consider $Y(n,p;d)$, the $d$-dimensional complex on $n$ vertices obtaining by taking the complete $(d-1)$-dimensional skeleton on $[n]$, and then introducing every $d$-dimensional face independently at random with probability $p$. Given $Y = Y(n,p;d)$ and $0 \leq k \leq d$, let $Y^{(k)}$ denote the set of $k$-dimensional faces in $Y$ (i.e., faces containing exactly $k+1$ vertices).
%
In our proofs, we make use of the following variant of the Chernoff bounds (see \cite[Chapter~4]{Chernoffcite}).

\begin{lem}\label{feelthechern}
Let $p \in (0,1)$, $N \in \N,$ and $\eps > 0$. Then 
	\begin{equation}
	\label{eqn:ChernoffUpper}
		\Prob{\Bin(N,p) \ge (1+\eps)Np} \le e^{-\eps^2 Np / 3}
	\end{equation}
		and
	\begin{equation}
	\label{eqn:ChernoffLower}
		\Prob{\Bin(N,p) \le (1-\eps)Np} \le e^{-\eps^2 Np / 2}.
	\end{equation}
\end{lem}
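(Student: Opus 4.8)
The plan is to use Chernoff's exponential-moment method, which yields both bounds by the same mechanism. Write $\Bin(N,p) = \sum_{i=1}^N X_i$, where the $X_i$ are independent Bernoulli$(p)$ variables, and abbreviate $\mu = Np$. For the upper tail I would fix an arbitrary $t > 0$ and apply Markov's inequality to the nonnegative variable $e^{tX}$:
\[
\Prob{X \ge (1+\eps)\mu} = \Prob{e^{tX} \ge e^{t(1+\eps)\mu}} \le e^{-t(1+\eps)\mu}\, \E{e^{tX}}.
\]
By independence the moment generating function factorises, $\E{e^{tX}} = (1 - p + p e^t)^N$, and the elementary inequality $1 + x \le e^x$ with $x = p(e^t - 1)$ gives $\E{e^{tX}} \le \exp(\mu(e^t - 1))$. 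Hence $\Prob{X \ge (1+\eps)\mu} \le \exp(\mu(e^t - 1) - t(1+\eps)\mu)$ for every $t > 0$.

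Next I would optimise the exponent over $t$. Differentiating $e^t - 1 - t(1+\eps)$ and equating to zero gives $t = \log(1+\eps) > 0$, which produces the canonical bound
\[
\Prob{X \ge (1+\eps)\mu} \le \left( \frac{e^{\eps}}{(1+\eps)^{1+\eps}} \right)^{\mu} = \exp\big( \mu ( \eps - (1+\eps)\log(1+\eps) ) \big).
\]
The lower tail is handled symmetrically: for $t > 0$ apply Markov to $e^{-tX}$, bound $\E{e^{-tX}} \le \exp(\mu(e^{-t}-1))$ in the same way, and optimise to get $t = -\log(1-\eps)$ (valid for $0 < \eps < 1$) and
\[
\Prob{X \le (1-\eps)\mu} \le \left( \frac{e^{-\eps}}{(1-\eps)^{1-\eps}} \right)^{\mu} = \exp\big( \mu ( -\eps - (1-\eps)\log(1-\eps) ) \big).
\]

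It then remains to pass from these sharp exponents to the stated Gaussian-type exponents, which is the only genuine work, and I expect this routine calculus to be the main obstacle. For the upper tail I must verify $(1+\eps)\log(1+\eps) - \eps \ge \eps^2/3$, and for the lower tail $\eps + (1-\eps)\log(1-\eps) \ge \eps^2/2$. In each case I would set the difference of the two sides equal to a function $g$ with $g(0) = g'(0) = 0$ and track the sign of $g''$. The lower-tail function has $g''(\eps) = \eps/(1-\eps) \ge 0$ on $[0,1)$, so convexity delivers the bound for all $\eps \in (0,1)$; for $\eps \ge 1$ the probability is either $\Prob{X=0} \le e^{-\mu}$ or zero, so the second inequality holds for all $\eps > 0$. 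The upper-tail function has $g''(\eps) = \tfrac{1}{1+\eps} - \tfrac{2}{3}$, convex on $[0,\tfrac12]$ and concave thereafter, so I would combine the convexity estimate on $[0,\tfrac12]$ with the endpoint values of $g$ on $[\tfrac12,1]$ to conclude the inequality on the range $(0,1]$, which is the standard textbook regime in which this form of the first bound is applied.
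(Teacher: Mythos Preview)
The paper does not prove this lemma; it simply cites a textbook (``see \cite[Chapter~4]{Chernoffcite}'') and states the bounds without argument. Your proposal is exactly the standard exponential-moment proof one finds in such references, and the steps you outline are all correct: the MGF bound, the optimisation in $t$, and the calculus reductions to the canonical Chernoff exponents are the textbook route.

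One remark on your final paragraph. You are right to restrict the upper-tail inequality $(1+\eps)\log(1+\eps) - \eps \ge \eps^2/3$ to $\eps \in (0,1]$: your concavity-plus-endpoints argument on $[\tfrac12,1]$ is fine there, but the inequality genuinely fails for larger $\eps$ (e.g.\ at $\eps=2$ one gets $3\log 3 - 2 \approx 1.296 < 4/3$), so the bound \eqref{eqn:ChernoffUpper} in the form stated cannot be obtained from the sharp Chernoff exponent once $\eps$ exceeds~$1$. The version valid for all $\eps>0$ has exponent $-\eps^2\mu/(2+\eps)$ rather than $-\eps^2\mu/3$, and these coincide only up to $\eps=1$. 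Since the paper's applications of \eqref{eqn:ChernoffUpper} sometimes take $\eps>1$, this is really an imprecision in the paper's statement of the lemma rather than a defect in your proof; the looser constants that result are harmless for the paper's purposes. Your treatment of the lower tail, including the separate handling of $\eps\ge 1$, is complete.
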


\section{The minimum (co)degree of $Y(n,p;d)$} \label{sec:min_codeg}

The following lemma builds very strongly on the work of Kolokolnikov, Osting, and Von Brecht \cite{kolokolnikov2014algebraic}, who obtained very sharp bounds on the minimum vertex degree in $G(n,p)$, and its relation to the spectral gap of the graph just above the connectivity threshold, in particular, when $p = (1+\eps) \tfrac{\log n}{n}$ for $\eps > 0$. (More specifically, see Lemmas 3.3. and 3.4 in~\cite{kolokolnikov2014algebraic}.)

\begin{lem}
\label{lem:minDegree}
Let $p = (1+\eps) \frac{d \log n}{n}$, and let $a = a(\eps)$ denote the solution to
\begin{equation}
\label{eqn:aDefn}
\eps = (1+\eps)(1-\log a)a.
\end{equation}
Let $Y = Y(n,p;d)$ and let
\[
\delta (Y) = \min_{\sigma \in Y^{(d-1)}} |\{v \in [n] \setminus \sigma^{(0)} : \sigma \cup \{v\} \in Y^{(d)} \}|
\]
be the minimum co-degree of a $(d-1)$-dimensional face in $Y$. Then there exists a constant $C > 0$ such that w.h.p. we have that
\[
|\delta (Y) - (1+\eps) a d \log n | \leq C \sqrt{\log n}.
\]
\end{lem}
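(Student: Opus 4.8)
The plan is to reduce the statement about the minimum co-degree of $Y(n,p;d)$ to a statement about the minimum of a collection of binomial random variables, and then carry out a second-moment / Chernoff analysis in the spirit of Lemmas 3.3 and 3.4 of Kolokolnikov, Osting, and Von Brecht. First I would observe that for a fixed $(d-1)$-face $\sigma$, its co-degree $\dgr{\sigma}$ is distributed as $\Bin(n-d, p)$, since each of the $n-d$ vertices outside $\sigma$ independently completes $\sigma$ to a $d$-face with probability $p$. Thus $\delta(Y) = \min_{\sigma} \dgr{\sigma}$ is the minimum of $\binom{n}{d}$ such binomials, each with mean $(n-d)p = (1+\eps)d\log n \cdot (1 - d/n) = (1+\eps)d\log n + O(\log n / n)$. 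Writing $m := \binom{n}{d} = \Theta(n^d)$, I want to show this minimum is concentrated near the value $t^\star := (1+\eps)ad\log n$.

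For the \emph{lower bound} (no $(d-1)$-face has co-degree much below $t^\star$), I would use a union bound: for $t = t^\star - C\sqrt{\log n}$, I need $m \cdot \Prob{\Bin(n-d,p) \le t} = o(1)$. The key is a sharp lower-tail estimate for the binomial. Using the standard large-deviation rate function, $\Prob{\Bin(N,p) \le \alpha Np} \approx \exp\left(-Np \cdot I(\alpha)\right)$ where $I(\alpha) = \alpha\log\alpha - \alpha + 1$, and plugging in $N \approx n$, $Np \approx (1+\eps)d\log n$, $\alpha \approx a$, the exponent becomes $-(1+\eps)d\log n \cdot (a\log a - a + 1) = -(1+\eps)d\log n \cdot (1 - (1-\log a)a)$. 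By the defining equation~\eqref{eqn:aDefn} for $a$, $(1+\eps)(1-\log a)a = \eps$, so this exponent equals $-(1+\eps)d\log n + \eps d\log n = -d\log n$, giving probability $\approx n^{-d}$, exactly matching $1/m$ up to polynomial factors. The $C\sqrt{\log n}$ correction and the constant $C$ are chosen precisely to beat these polynomial factors: shifting $t$ down by $\Theta(\sqrt{\log n})$ changes the exponent by $\Theta(\sqrt{\log n})$ (since the rate function has nonzero derivative at $a$, as $a < 1$), which suppresses the union-bound sum by a factor $n^{-\Omega(1)}$. One must be careful that the normal-approximation or Stirling-based tail bound is valid in this moderately-large-deviation regime; I would cite or reproduce the relevant precise binomial tail asymptotics rather than the crude Chernoff bound of Lemma~\ref{feelthechern}, which is too lossy by itself.

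For the \emph{upper bound} (some $(d-1)$-face has co-degree at most $t^\star + C\sqrt{\log n}$), I would use the second moment method on the count $X = |\{\sigma \in Y^{(d-1)} : \dgr{\sigma} \le t\}|$ with $t = t^\star + C\sqrt{\log n}$. By the matching upper-tail-direction estimate, $\E{X} = m \cdot \Prob{\Bin(n-d,p) \le t} \to \infty$ (here the $+C\sqrt{\log n}$ shift makes the probability $\ge n^{-d + \Omega(1)}$). For the variance, the main subtlety is that co-degrees of different $(d-1)$-faces are \emph{not} independent: two faces $\sigma, \sigma'$ sharing $d-1$ vertices can be completed to the same $d$-face by a common vertex, so $\dgr{\sigma}$ and $\dgr{\sigma'}$ are positively correlated through shared $d$-faces. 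However, any single $d$-face contains only $d+1$ faces of dimension $d-1$, so the covariance contribution is small: I would bound $\Cov(\I{\dgr{\sigma}\le t}, \I{\dgr{\sigma'}\le t})$ and show $\sum_{\sigma,\sigma'} \Cov \ll (\E X)^2$, so that $\Var X = o((\E X)^2)$ and Chebyshev gives $X > 0$ w.h.p.

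The main obstacle, as usual in these minimum-degree problems, is controlling the dependence between co-degrees in the second-moment computation sharply enough, combined with getting the binomial tail asymptotics precise to within the $\sqrt{\log n}$ window — the crude Chernoff bounds of Lemma~\ref{feelthechern} are insufficient, and I would need the more refined local/moderate-deviation estimates (as used in~\cite{kolokolnikov2014algebraic}) to pin down the constant $C$ and verify that the lower and upper bounds match at scale $\sqrt{\log n}$. The pleasant feature is that the defining equation~\eqref{eqn:aDefn} for $a$ is exactly engineered so that the first-moment threshold for the minimum co-degree lands at $(1+\eps)ad\log n$; once the tail estimates are in hand, the structure of the argument is a routine first-moment / second-moment dichotomy.
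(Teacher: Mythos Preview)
Your proposal is correct and follows essentially the same approach as the paper: a first-moment (union) bound for the lower tail and a second-moment (Chebyshev) argument for the upper tail, driven by the refined binomial tail asymptotics of~\cite{kolokolnikov2014algebraic} and the observation that the defining relation for $a$ makes the exponent hit exactly $-d\log n$. The paper's variance computation matches your sketch---only pairs $\sigma,\sigma'$ with $|\sigma\cap\sigma'|=d-1$ contribute, and conditioning on the unique common $d$-face yields $\Cov(X_\sigma,X_{\sigma'})=O(p)\cdot\Prob{X_\sigma=1}^2$, which suffices.
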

\begin{proof}
For a random variable $X$ following the binomial distribution $\Bin(n,p)$ and $c > 0$, let
\[
 f_n(p,c) = \Prob{X \leq c n p} = \sum_{i=0}^{\lfloor c n p \rfloor} \binom{n}{i} p^i (1-p)^{n-i}.
\]
For $c > 0$, set
\begin{equation}
\label{eqn:HDefn}
\cH(c) = c-c \log c - 1.
\end{equation}
Observe that by \eqref{eqn:aDefn} and \eqref{eqn:HDefn} we have that $(1+\eps)\cH(a(\eps)) = -1$.

It can be shown (see Lemma 3.3 in \cite{kolokolnikov2014algebraic}) that for $p = \Theta(\log n / n)$ 
%and $c = (1+o(1)) b$ for some $b \in (0,1)$, 
there exist constants $c_1, c_2 > 0$ such that
\begin{equation}
\label{eqn:fBounds}
\frac{c_1 e^{np \cH(c)}}{\sqrt{np}} \leq f_n(p, c) \leq c_2 \sqrt{np} e^{np \cH(c)}.
\end{equation}

Recall that for a set $\sigma \subset [n]$ of $d$ vertices,  $\dgr{\sigma}$ denotes the number of $d$-dimensional faces in $Y$ containing $\sigma$. Clearly, $\dgr{\sigma}$ follows the binomial distribution $\Bin(n-d,p)$. 
The application of~\eqref{eqn:fBounds} to the random variable $\dgr{\sigma}$ yields: 
\begin{equation}
\label{eqn:factualBounds}
\frac{c_1 e^{np \cH(c)}}{2\sqrt{np}} \leq \frac{c_1 e^{(n-d)p \cH(c)}}{\sqrt{(n-d)p}} \leq f_{n-d}(p, c) \leq c_2 \sqrt{(n-d)p} e^{(n-d)p \cH(c)} \leq c_2 \sqrt{np} e^{np \cH(c)}.
\end{equation}

Furthermore, 
\[
a n p \pm \sqrt{np} = a(n-d)p \pm \sqrt{np} + a dp= \left ( a \pm \frac{\sqrt{np}}{(n-d)p} +\frac{a d}{n-d}\right ) (n-d)p.
\]
Since $a(\eps) \in (0,1)$ for all $\eps > 0$ (see Remark 1.3 in \cite{kolokolnikov2014algebraic}), by taking
\[
c_0^\pm = a \pm \frac{\sqrt{np}}{(n-d)p} + \frac{ad}{n-d}
\]
we can use \eqref{eqn:factualBounds} to see that
\begin{align*}
\Prob{\dgr{\sigma} \leq a n p - \sqrt{np} } & \leq c_2 \sqrt{np} e^{np \cH(c_0^-)} \\
  & = c_2 \sqrt{(1+\eps)d \log n} \cdot \exp ((1+\eps)d \log n \cH(c_0^-)).
\end{align*}
and
\begin{align*}
\Prob{\dgr{\sigma} \leq a n p + \sqrt{np} } & \geq \frac{c_1 e^{np \cH(c_0^+)}}{2\sqrt{np}} \\
  & = \frac{c_1 \exp((1+\eps)d \log n \cH(c_0^+))}{2\sqrt{(1+\eps)d \log n}}.
\end{align*}
Since both $\cH(c)$ and $\cH'(c) = -\log c$ are continuous and positive on $(0,1)$, we have that 
\[
\cH(c_0^\pm) = \cH(a) \pm \frac{(1+o(1))\cH'(a)}{\sqrt{(1+\eps)d \log n}}.
\]

We start by showing that w.h.p. we have $\delta (Y) \geq a n p - \sqrt{np}$. 
For a fixed subset $\sigma$ of size $d$ we have that
\begin{eqnarray*}
\lefteqn{\Prob{\dgr{\sigma} \leq a n p - \sqrt{np} }  \leq}\\
&& c_2 \sqrt{(1+\eps)d \log n} \exp \left ( (1+\eps)d \log n \left ( \cH(a) - \frac{(1+o(1))\cH'(a)}{\sqrt{(1+\eps)d \log n}} \right ) \right ) \\
 & &= c_2 \sqrt{(1+\eps)d \log n} \exp \left (- d \log n - \Theta \left ( \sqrt{\log n} \right ) \right ) \\
 & &= n^{-d} \exp \left (- \Theta \left ( \sqrt{\log n} \right ) \right ).
\end{eqnarray*}
Therefore the expected number of $d$-element sets $\sigma$ with $\dgr{\sigma} \leq a n p - \sqrt{np}$ is at most
\[
\binom{n}{d} n^{-d} \exp \left (- \Theta \left ( \sqrt{\log n} \right ) \right ) = o(1),
\]
and consequently w.h.p. we have $\delta (Y) \geq a n p - \sqrt{np}$.

To bound $\delta (Y)$ from above, we first find an upper bound on $\dgr{\sigma}$. 
Using that $(1+\eps) \cH(a)=-1$, we have
\begin{align*}
\Prob{\dgr{\sigma} \leq a n p + \sqrt{np} } & \geq \frac{c_1\exp \left ( (1+\eps)d \log n \left ( \cH(a) + \frac{(1+o(1))\cH'(a)}{\sqrt{(1+\eps)d \log n}}\right ) \right )}{2\sqrt{(1+\eps)d \log n}} \\
 & = \frac{1}{2\sqrt{(1+\eps)d \log n}} c_1\exp \left (- d \log n + \Theta \left ( \sqrt{\log n} \right ) \right ) \\
 & = n^{-d} \exp \left (\Theta \left ( \sqrt{\log n} \right ) \right ).
\end{align*}
Let $X_\sigma = \indicator{\dgr{\sigma} \leq a n p + \sqrt{np}}$ and let $N_0 = \sum_{\sigma \in Y^{(d-1)}} X_\sigma$ denote the number of $d$-element subsets $\sigma$ with $\dgr{\sigma} \leq a n p + \sqrt{np}$. Hence, letting $\mu = \E{|N_0|}$ we have
\begin{equation}
\label{eqn:mu}
\mu = \binom{n}{d} f_{n-d} (p, c_0^+) \geq \exp \left (\Theta \left ( \sqrt{\log n} \right ) \right ) \to \infty
\end{equation}
as $n \to \infty$.

By Chebyshev's inequality we then have
\begin{equation}
\label{eqn:chebyshev}
\Prob{|N_0-\mu| > \mu/2} \leq \frac{4\Var(N_0)}{\mu^2}.
\end{equation}
The co-degrees of two subsets $\sigma, \sigma'$ are independent whenever $|\sigma \cap \sigma'| \neq d-1$. Thus the variance of $N_0$ satisfies
\begin{align*}
\Var(N_0) & = \sum_{\sigma \in Y^{(d-1)}} \Var(X_\sigma) + \sum_{\sigma, \sigma' \in Y^{(d-1)} : |\sigma \cap \sigma'| = d-1} \Cov(X_\sigma, X_{\sigma'}) \\
& \leq \binom{n}{d} f_{n-d} (p, c_0^+) + n^{d+1} \Cov(X_\sigma, X_{\sigma'}),
\end{align*}
where $\sigma, \sigma'$ are two fixed sets satisfying $|\sigma \cap \sigma'| = d-1$. Since
\begin{equation}
\label{eqn:covariance}
\Cov(X_\sigma, X_{\sigma'}) = \Prob{X_\sigma = X_{\sigma'} = 1} - (f_{n-d} (p, c_0^+))^2,
\end{equation}
we focus on the value of $\Prob{X_\sigma = X_{\sigma'} = 1}$. Let $\dg{\sigma}{\setminus \sigma'}$ denote the number of $d$-dimensional faces that contain a $d$-subset $\sigma$ but do not contain the $d$-subset $\sigma'$. Using the law of total probability, conditioning on the presence or absence of the unique face that contains both $\sigma$ and $\sigma'$, and 
since $\dg{\sigma}{\setminus \sigma'}$ and $\dg{\sigma'}{\setminus \sigma}$ are identically distributed, 
we have
\begin{eqnarray} \label{eq:covs}
 \lefteqn{\Prob{X_\sigma = X_{\sigma'} = 1}  =} \nonumber \\
 & & \Prob {\dg{\sigma}{\setminus \sigma'} + 1 \leq a n p + \sqrt{np} } 
 \Prob {\dg{\sigma'}{\setminus \sigma} + 1 \leq a n p + \sqrt{np} } p  \nonumber \\
  & &+ \Prob{\dg{\sigma}{\setminus \sigma'} \leq a n p + \sqrt{np} } \Prob {\dg{\sigma'}{\setminus \sigma} \leq a n p + \sqrt{np} } (1-p) \nonumber  \\
  & &\leq \left[ \Prob {\dg{\sigma}{\setminus \sigma'} + 1 \leq a n p + \sqrt{np}}   \right ]^2 p + 
  \left[ \Prob{\dg{\sigma}{\setminus \sigma'} \leq a n p + \sqrt{np} } \right]^2.
\end{eqnarray}
In particular, the random variable $\dg{\sigma}{\setminus \sigma'}$ follows the binomial distribution 
$\Bin (n-d-1,p)$ and, thereby, it is stochastically dominated by $\Bin (n-d,p)$. 
So 
\begin{eqnarray*}
\lefteqn{\Prob {\dg{\sigma}{\setminus \sigma'} + 1 \leq a n p + \sqrt{np}} =} \\ 
& & \Prob {\Bin (n-d-1,p) \leq a n p + \sqrt{np}-1} \leq \Prob{\Bin (n-d,p) \leq a np + \sqrt{np}}
= f_{n-d} (p, c_0^+).
\end{eqnarray*}
For the second term in~\eqref{eq:covs}, we have 
\begin{align*}
 \Prob{\dg{\sigma}{\setminus \sigma'} \leq a n p + \sqrt{np} } & = \sum_{j=0}^{\lfloor a n p + \sqrt{np} \rfloor} \binom{n-d-1}{j} p^j (1-p)^{n-d-1-j} \\
  & \leq \sum_{j=0}^{\lfloor a n p + \sqrt{np} \rfloor} \binom{n-d}{j} p^j (1-p)^{n-d-1-j} \\
  & = \left ( 1 + \frac{p}{1-p} \right ) \sum_{j=0}^{\lfloor a n p + \sqrt{np} \rfloor} \binom{n-d}{j} p^j (1-p)^{n-d-j} \\
  &\stackrel{1-p> 1/2}{\leq}(1+2p) f_{n-d} (p, c_0^+).
\end{align*}

Hence for $n$ large enough we have
\[
 \Prob{X_\sigma = X_{\sigma'} = 1} \leq (p + (1+2p)^2) \left( f_{n-d} (p, c_0^+) \right )^2 
 \leq (1+6p) \left ( f_{n-d} (p, c_0^+) \right )^2,
\]
and consequently
\begin{equation}
\label{eqn:covIneq}
\Cov(X_\sigma, X_{\sigma'}) \leq 6p (f_{n-d} (p, c_0^+))^2.
\end{equation}
Thus by \eqref{eqn:chebyshev}, \eqref{eqn:covariance}, and \eqref{eqn:covIneq}, we obtain
\begin{align*}
\Prob{|N_0-\mu| > \mu/2} & \leq \frac{\mu + 6p n^{d+1} (f_{n-d} (p, c_0^+))^2}{\mu^2} \\
 & = \frac{1}{\mu} + O(n^{-d} \log n) = o(1),
\end{align*}
since we have $\mu \to \infty$. Thus with high probability we have that the minimum co-degree of a $d$-element set is at most $a n p + \sqrt{np}$ and the lemma holds.

\end{proof}

For $k \in \Z$, let $W_k(x)$ be the $k$-th branch of the Lambert $W$ function, defined as
\[
W_k(x) e^{W_k(x)} = x.
\]
We now discuss some further properties of the function $a(\eps)$ defined in \eqref{eqn:aDefn}.
This lemma shows that for small $\eps$ the function $a(\eps)$ is bounded by a linear function on $\eps$. 
We will use this bound in the next section.
\begin{lem} \label{lem:a_bound}
We have
\begin{equation}
 \label{eqn:aformula}
 a = a(\eps) = \exp \left ( 1 + W_{-1} \left ( - \frac{\eps}{e(1+\eps)} \right ) \right ) < \min \left \{1, 0.33 \eps \right \}
\end{equation}
for all $\eps > 0$.
\end{lem}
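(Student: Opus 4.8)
The plan is to establish the two assertions in \eqref{eqn:aformula} in turn: the closed form for $a(\eps)$, and then the estimate $a(\eps)<\min\{1,0.33\eps\}$.

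\emph{Closed form.} I would start from \eqref{eqn:aDefn} in the form $\tfrac{\eps}{1+\eps}=(1-\log a)a$ and substitute $a=e^{1+z}$, so that $1-\log a=-z$ and $(1-\log a)a=-e\,z\,e^{z}$. The equation becomes $z\,e^{z}=-\tfrac{\eps}{e(1+\eps)}$, which identifies $z$ as a value of the Lambert $W$ function at $-\tfrac{\eps}{e(1+\eps)}$. Since $\tfrac{\eps}{1+\eps}<1$ for every $\eps>0$, the argument lies in $(-1/e,0)$, where the two real branches $W_{0}$ and $W_{-1}$ are defined, with ranges $[-1,0)$ and $(-\infty,-1]$ respectively. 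To pick the right one I use that $a=a(\eps)\in(0,1)$ forces $z=\log a-1<-1$, which is the range of $W_{-1}$; hence $z=W_{-1}\!\left(-\tfrac{\eps}{e(1+\eps)}\right)$ and $a=e^{1+z}=\exp\!\left(1+W_{-1}\!\left(-\tfrac{\eps}{e(1+\eps)}\right)\right)$. The bound $a<1$ then comes along for free: $-\tfrac{\eps}{e(1+\eps)}$ is strictly larger than $-1/e$, so its $W_{-1}$-value is strictly less than $-1$, i.e.\ $z<-1$ and $a=e^{1+z}<1$. (Equivalently, $a<1$ follows from $g(a):=(1-\log a)a$ being a strictly increasing bijection of $(0,1)$ onto $(0,1)$, since $g'(a)=-\log a>0$ there, while $\tfrac{\eps}{1+\eps}\in(0,1)$.)

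\emph{The bound $a<0.33\eps$.} Solving \eqref{eqn:aDefn} for $\eps$ gives $\eps=\dfrac{(1-\log a)a}{1-(1-\log a)a}$, whence the clean identity
\[
\frac{a(\eps)}{\eps}=\frac{1}{1-\log a}-a=:\psi(a),\qquad a=a(\eps)\in(0,1).
\]
So it suffices to show $\psi(a)<0.33$ for all $a\in(0,1)$, equivalently — since $1-\log a>0$ and $0.33+a>0$ — that $F(a):=(0.33+a)(1-\log a)>1$ on $(0,1)$. I would do this by a short analysis of $F$: from $F'(a)=-\log a-0.33/a$, interior critical points satisfy $-a\log a=0.33$, and since $\max_{(0,1)}(-a\log a)=1/e>0.33$ there are exactly two of them, $a_{1}<1/e<a_{2}$, with $F$ decreasing on $(0,a_{1})$, increasing on $(a_{1},a_{2})$, and decreasing on $(a_{2},1)$. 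Hence the infimum of $F$ on $(0,1)$ is governed either by $a_{1}$ or by the limit $a\to1$, where $F(1)=1.33$. At $a_{1}$ we have $1-\log a_{1}=1+0.33/a_{1}$, so by AM--GM
\[
F(a_{1})=(0.33+a_{1})\Bigl(1+\tfrac{0.33}{a_{1}}\Bigr)=0.66+a_{1}+\tfrac{0.33^{2}}{a_{1}}\ \geq\ 0.66+2\cdot 0.33=1.32 .
\]
Thus $F(a)\geq1.32>1$ on $(0,1)$, giving $\psi(a)<0.33$ there and hence $a(\eps)<0.33\,\eps$; together with $a<1$ this proves the lemma.

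Everything here is elementary, so the only places needing care are getting the substitution and, above all, the correct branch of $W$ right — the sign inequality $z=\log a-1<-1$ is exactly what distinguishes $W_{-1}$ from $W_{0}$ — and remembering, in the analysis of $F$, that since $F$ is not monotone one must also check the endpoint $a\to1$ and not just the critical point $a_{1}$. The constant $0.33$ is not tight (one has $\sup_{(0,1)}\psi\approx0.20$, so the same argument would even give $a<0.21\,\eps$), which is why the crude AM--GM estimate comfortably suffices.
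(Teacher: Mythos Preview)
Your derivation of the closed form and the bound $a<1$ matches the paper's argument essentially verbatim (same substitution, same branch selection via $\log a-1<-1$). Where you diverge is in the inequality $a<0.33\eps$: the paper differentiates $a(\eps)$ using the Lambert-$W$ derivative identity $W'(z)=W(z)/(z(1+W(z)))$, invokes an external bound of Chatzigeorgiou on $W_{-1}$ to control $a'(\eps)$, and then appeals to a sign analysis of $a''(\eps)$ to conclude that $\sup_{\eps>0}a'(\eps)<0.33$, integrating from $a(0^{+})=0$. Your route --- inverting to write $a/\eps=\psi(a)=\tfrac{1}{1-\log a}-a$ and bounding $\psi$ directly via the one-variable analysis of $F(a)=(0.33+a)(1-\log a)$ with AM--GM at the critical point --- is correct, entirely self-contained, and considerably cleaner: it avoids the Lambert-$W$ calculus, the cited $W_{-1}$ estimate, and the second-derivative step. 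The paper's approach does give as a byproduct the stronger statement that $a'(\eps)<0.33$ everywhere, but for the lemma as stated your argument is both shorter and more transparent, and your observation that $\sup\psi\approx0.20$ shows there is slack the paper's method does not expose.
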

\begin{proof}
First, note that we can rewrite \eqref{eqn:aDefn} as
\[
\frac{\eps}{1+\eps} = (1-\log a)a = - e(\log a-1) \exp(\log a - 1),
\]
so we have that $\log a - 1 = W_k(-\frac{\eps}{e(1+\eps)})$ for some $k \in \Z$, and consequently that
\[
 a (\eps) = \exp \left ( 1 + W_{k} \left ( - \frac{\eps}{e(1+\eps)} \right ) \right ).
\]
Since only the $0$th and $(-1)$th branch of the Lambert $W$ function are real, we must have $k=0$ or $k=-1$. Finally, since for $-1/e<x<0$ we have $W_{-1}(x) < -1 < W_{0}(x)$ and we must have $a(\eps) \in (0,1)$, we see that we must have $k = -1$ for all $\eps > 0$.

We now move on to showing that $a(\eps) < \min \{1, 0.33\eps \}$. The bound $a(\eps) < 1$ follows from Remark 1.3 in \cite{kolokolnikov2014algebraic}. Hence we focus on the bound $a(\eps) < 0.33\eps$.

First, using the property that for any branch of the Lambert $W$ function and any $z \in (-e^{-1}, 0)$ we have $W'(z) = \frac{W(z)}{z(1+W(z))}$, and that $e^{W(z)} = z/W(z)$, we obtain
\begin{align*}
a'(\eps) & = \exp \left ( 1 + W_{k} \left ( - \frac{\eps}{e(1+\eps)} \right ) \right ) \frac{W_{k} \left ( - \frac{\eps}{e(1+\eps)} \right )}{- \frac{\eps}{e(1+\eps)}\left(1+W_{k} \left ( - \frac{\eps}{e(1+\eps)} \right )\right)} \frac{-1}{e(1+\eps)^2} \\
 & = - \frac{\exp \left ( W_{k} \left ( - \frac{\eps}{e(1+\eps)} \right ) \right )}{-\frac{\eps}{e(1+\eps)}} \frac{W_{k} \left ( - \frac{\eps}{e(1+\eps)} \right )}{\left(1+W_{k} \left ( - \frac{\eps}{e(1+\eps)} \right)\right)} \frac{1}{(1+\eps)^2} \\
 & = - \frac{1}{(1+\eps)^2 \left(1+W_{k} \left ( - \frac{\eps}{e(1+\eps)} \right) \right)}.
\end{align*}
It was shown by Chatzigeorgiou \cite{chatzigeorgiou2013lambert} that for $u > 0$ we have
\[
 W_{-1} \left ( -e^{-u-1} \right ) < -1 -\sqrt{2u} - \frac{2u}{3}.
\]
Taking $u = \log \frac{1+\eps}{\eps}$ leads to the bound
\[
a'(\eps) < \frac{1}{(1+\eps)^2 \left ( \sqrt{2 \log \frac{1+\eps}{\eps}} + \frac{2}{3} \log \frac{1+\eps}{\eps} \right )}.
\]
(Recall that $W_1(z) < -1$, so in absolute value the above inequality has the opposite sign.) Since $(1+\eps)^2$ is increasing in $\eps$, and  $\sqrt{2 \log \frac{1+\eps}{\eps}} + \frac{2}{3} \log \frac{1+\eps}{\eps}$ is decreasing in $\eps$, for $0 < \eps \leq 1/5$ we have
\[
a'(\eps) < \frac{1}{\left ( \sqrt{2 \log \frac{1.2}{0.2}} + \frac{2}{3} \log \frac{1.2}{0.2} \right )} < 0.33.
\]
Also, $a''(\eps)$ is positive for $\eps < \eps_0 < 0.189$, and negative for $\eps > \eps_0$, so the maximum value of $a'(\eps)$ is obtained for some $0< \eps < 0.189$, where $a'(\eps) < 0.33$. Since we have $W_{-1} \left ( - \frac{\eps}{e(1+\eps)} \right ) \to -\infty$ as $\eps \to 0$, and consequently $a(\eps) \to 0$ as $\eps \to 0$, we have that $a(\eps) < 0.33\eps$ for all $\eps > 0$.
\end{proof}

\section{Cheeger constant}
\label{sec:cheeger}

In this section we consider the measure of expansion of a simplicial complex which is called its \emph{Cheeger constant} and was defined in~\eqref{eq:Cheeger_def}.
%\begin{df}
%\label{defn:cheeger}
%For a $d$-dimensional complex $X$ with the set $V$ of $n$ vertices, the Cheeger constant is defined as
%\[
% h(X) = \min_{V = \biguplus_{i=0}^d A_i} \frac{n \cdot |F(A_0, A_1, \ldots, A_d)|}{\prod_{i=0}^d |A_i|},
%\]
%where the minimum is taken over the set of all partitions $A_0, A_1, \ldots, A_d$ of $V$, and $F(A_0, A_1, \ldots, A_d)$ is the set of all $d$-dimensional faces of $X$ with exactly one vertex in each of the sets $A_0, A_1, \ldots, A_d$.
%\end{df}
%
As the main result in this section we prove the following theorem.

\begin{thm}
\label{thm:mainCheeger}
Let $p = (1+\eps)  \frac{d \log n}{n}$ and let $Y = Y(n,p;d)$ and let $a=a(\eps)$ be as in~\eqref{eqn:aDefn}. There exists a positive constant $C > 0$ such that w.h.p. we have
\begin{equation}
\label{eqn:mainCheeger}
(1+\eps) a d \log n  - C\sqrt{\log n} \leq h(Y) \leq \delta (Y) \leq (1+\eps) a d \log n  + C\sqrt{\log n} .
\end{equation}
\end{thm}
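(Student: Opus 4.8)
The claim has three parts: the two-sided bound on $\delta(Y)$, which is exactly Lemma~\ref{lem:minDegree}; the upper bound $h(Y)\le\delta(Y)$; and the lower bound $h(Y)\ge(1+\eps)ad\log n-C\sqrt{\log n}$. The upper bound is the easy direction. Let $\sigma=\{v_0,\dots,v_{d-1}\}$ be a $(d-1)$-face of minimum co-degree and take the partition $A_i=\{v_i\}$ for $i\le d-1$, $A_d=[n]\setminus\sigma$. Since $A_0,\dots,A_{d-1}$ are singletons, a $d$-face lies in $F(A_0,\dots,A_d)$ iff it equals $\sigma\cup\{v\}$ for some $v\in A_d$, so $|F(A_0,\dots,A_d)|=\dgr{\sigma}=\delta(Y)$ while $\prod_i|A_i|=n-d$; by~\eqref{eq:Cheeger_def} this gives $h(Y)\le\frac{n}{n-d}\delta(Y)=\delta(Y)(1+O(1/n))$. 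Since $\delta(Y)=O(\log n)$ w.h.p.\ (Lemma~\ref{lem:minDegree}), the extra factor is $o(1)$, and together with the upper estimate $\delta(Y)\le(1+\eps)ad\log n+C\sqrt{\log n}$ of that lemma it yields the right-hand chain of~\eqref{eqn:mainCheeger}.

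The plan for the lower bound is to show that w.h.p.\ \emph{every} ordered partition $[n]=A_0\uplus\cdots\uplus A_d$ into nonempty parts satisfies $\frac{n|F(A_0,\dots,A_d)|}{\prod_i|A_i|}\ge(1+\eps)ad\log n-C\sqrt{\log n}$. Relabel so that $A_0$ is a largest part and put $k=n-|A_0|=\sum_{i\ge1}|A_i|\ge d$; write $M=\prod_{i=0}^d|A_i|$, the number of ``rainbow'' $(d+1)$-sets, so that $|F(A_0,\dots,A_d)|\sim\Bin(M,p)$ (a sum of independent indicators, one per rainbow set). I would split according to $k$. If $k=d$, then $A_1,\dots,A_d$ are singletons and the partition isolates a $(d-1)$-face $\sigma$, so $|F|=\dgr{\sigma}\ge\delta(Y)$ and the ratio is $\ge\frac{n}{n-d}\delta(Y)\ge\delta(Y)$, which exceeds the target w.h.p.\ by Lemma~\ref{lem:minDegree}. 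If $k>d$, I would argue probabilistically: put $\gamma=a-\kappa/\sqrt{\log n}$ for a small constant $\kappa>0$, so that the target threshold is exactly $\gamma np$, and use the Chernoff bound behind~\eqref{eqn:fBounds}, $\Prob{\Bin(M,p)\le\gamma Mp}\le c_2\sqrt{Mp}\,e^{Mp\cH(\gamma)}$. Because $\cH$ is increasing on $(0,1)$ (its derivative is $-\log(\cdot)$) and $\cH(a)=-1/(1+\eps)$ by~\eqref{eqn:aDefn}, we have $\cH(\gamma)\le-1/(1+\eps)$; combining this with $M\ge(n-k)\prod_{i\ge1}|A_i|\ge(n-k)(k-d+1)$ gives $Mp\,|\cH(\gamma)|\ge(1-k/n)(k-d+1)d\log n$. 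The number of ordered partitions whose sorted profile has $\sum_{i\ge1}|A_i|=k$ is at most $\binom nk d^k\le(edn/k)^k$, so the lower bound reduces to showing that the union bound $\sum_{\text{partitions with }k>d}c_2\sqrt{Mp}\,e^{Mp\cH(\gamma)}$ tends to $0$, organized by size-profile. This is a short case analysis (the extremal profile being $(n-k,\,k-d+1,1,\dots,1)$): for $k=o(n)$ one uses $|A_0|\approx n$ together with the surplus $(k-d+1)d>k$, which holds precisely because $k>d$ (here $d\ge2$); for $k=\Theta(n)$ the factor $M$ alone swamps the partition count. (Alternatively, for $d<k\le d+C\sqrt{\log n}$ one may instead invoke the deterministic bound $|F|\ge(\prod_{i\ge1}|A_i|)(\delta(Y)-k+d)$, which holds because $Y$ has complete skeleton.)

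The step I expect to be the main obstacle is this union bound: one must make the Chernoff exponent $Mp\,|\cH(\gamma)|$ dominate $\log(\#\text{profiles})$ \emph{uniformly over all size-profiles} while the loss stays within the $\Theta(\sqrt{\log n})$ window. What makes it go through is precisely the calibration~\eqref{eqn:aDefn}, $(1+\eps)\cH(a)=-1$: this is exactly the identity equating the large-deviation rate for $|F|$ to fall to $a\,\E{|F|}$ with the $\approx\log n$ entropy cost of pulling one vertex out of the dominant part, so that every profile with $k>d$ carries a genuine surplus (of order $(d-1)(k-d)\log n$ for the worst profiles); if $a$ were any larger the union bound would fail. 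Two subsidiary points worth flagging: $|A_0|$ must be treated as $\approx n$, not merely $\ge n/(d+1)$, whenever $k=o(n)$ — this is what supplies the surplus for the ``one large part plus singletons'' profiles — and Lemma~\ref{lem:minDegree} is used twice (its upper bound for the right end of~\eqref{eqn:mainCheeger}, its lower bound for the isolating case $k=d$), while the elementary properties of $\cH$ and $a$ (Lemma~\ref{lem:a_bound}, in particular $a<1$) control the expansion $\cH(\gamma)=\cH(a)+\frac{\kappa\log a}{\sqrt{\log n}}+O(1/\log n)$ with $\log a<0$.
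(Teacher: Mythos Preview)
Your proposal is correct, and for the lower bound it takes a somewhat different and arguably cleaner route than the paper's. Both arguments obtain $h(Y)\le\delta(Y)$ from the isolating partition (you are right to flag the harmless factor $n/(n-d)$), and both close the bottom end deterministically via $\delta(Y)$ when the non-dominant parts are tiny (your case $k=d$, and your alternative for $d<k\le d+C\sqrt{\log n}$, match the paper's case $|A_{d-1}|<C(d,\eps)$). The difference is in the bulk. The paper slices by the size of the \emph{second} largest part $|A_{d-1}|$ into three ranges, and in the two probabilistic ranges aims only for a slack target---namely $(1+\eps)\log n$ and $(1+\eps)\min\{1,0.33\eps\}\log n$---then invokes the ad~hoc inequality $a<0.33\eps$ of Lemma~\ref{lem:a_bound} to compare with $a$. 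You instead parameterise by $k=n-|A_0|$ and, for every $k>d$, run the Cram\'er--Chernoff bound at the sharp threshold $\gamma=a-\kappa/\sqrt{\log n}$; the calibration $(1+\eps)\cH(a)=-1$ then makes the partition-entropy $k\log(edn/k)$ and the Chernoff exponent $Mp|\cH(\gamma)|\ge(1-k/n)(k-d+1)d\log n$ cancel to leading order, leaving a surplus of order $(d-1)(k-d)\log n$ at the extremal profile $(n-k,\,k-d+1,\,1,\dots,1)$. This is a single uniform estimate across all $k>d$ and sidesteps the nontrivial content of Lemma~\ref{lem:a_bound} entirely; the only facts about $a$ you use are~\eqref{eqn:aDefn} and $a<1$. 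One small technical remark: the estimate~\eqref{eqn:fBounds} you cite is stated in the paper only for $\Bin(n,p)$ with $p=\Theta(\log n/n)$, not for $\Bin(M,p)$ with general $M$; it is cleaner simply to quote the standard bound $\Prob{\Bin(M,p)\le\gamma Mp}\le\exp\bigl(Mp\,\cH(\gamma)\bigr)$, valid for all $M$ and all $\gamma\in(0,1)$, which is all your argument needs and drops the irrelevant polynomial prefactor.
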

\begin{proof}
The upper bound on $h(Y)$ follows immediately from Lemma \ref{lem:minDegree}. Indeed, if $A = \{a_0, \ldots, a_{d-1} \}$ is a $d$-element set with the minimum co-degree, then w.h.p. taking $A_i = \{ a_i \}$ for $0 \leq i \leq d-1$ and $A_d = [n] \setminus A$ gives us a partition with the desired value of 
$|F(A_0, A_1, \ldots, A_d)| = \delta (Y)$.
Thus, $h(Y) \leq \delta (Y)$.

So now we focus on lower-bounding $h(Y)$. First, observe that for a given set of values of $|A_0|, |A_1|, \ldots, |A_d|$, the value of $|F(A_0, A_1, \ldots, A_d)|$ follows the binomial distribution $\Bin (\prod_{i=0}^d |A_i|, p)$.

Without loss of generality let us assume that $|A_0| \leq |A_1| \leq \ldots \leq |A_d|$; note that this implies that $|A_d| \geq n/(d+1)$. We shall consider three possible cases, depending on the size of the second largest set $A_{d-1}$. 

First, let us assume that $|A_{d-1}| \geq n/(\log n)^{1/2}$, and set $|A_d| = \alpha n$ for some $\alpha \in [1/(d+1),1]$. In this case by \eqref{eqn:ChernoffLower} we have
\begin{align*}
& \Prob{n \cdot |F(A_0, A_1, \ldots, A_d)| \leq (1+\eps) \prod_{i=0}^d |A_i| \log n} \\
 & \qquad = \Prob{\Bin \left (\alpha n \prod_{i=0}^{d-1} |A_i|, (1+\eps)  \frac{d \log n}{n} \right ) \leq (1+\eps) \alpha \prod_{i=0}^{d-1} |A_i| \log n} \\
 & \qquad \leq \exp \left ( \left ( \frac{(1+\eps)d - (1+\eps)}{(1+\eps)d} \right ) ^2 (1+\eps) \alpha d  \log n \prod_{i=0}^{d-1} |A_i| / 2 \right ) \\
 & \qquad = \exp \left (- \Omega \left (n (\log n)^{1/2} \right ) \right ),
\end{align*}
where the last equality follows from $|A_{d-1}| \geq n/(\log n)^{1/2}$. Since there are at most $(d+1)^n$ partitions of $V$ into $d+1$ disjoint sets, by the union bound we see that with probability $1-o(1)$ every partition with $|A_{d-1}| \geq n/(\log n)^{1/2}$ has
\[
\frac{n \cdot |F(A_0, A_1, \ldots, A_d)|}{\prod_{i=0}^d |A_i|} > (1+\eps) \log n > (1+\eps) a(\eps) \log n,
\]
where the last inequality follows from $a(\eps) < 1$.

Now, assume that $C(d, \eps) \leq |A_{d-1}| < n/(\log n)^{1/2}$ for some constant $C(d, \eps) > 0$ to be determined later. Note that by the assumption that $|A_0| \leq |A_1| \leq \ldots \leq |A_d|$, this implies that $|A_d| = (1-o(1))n$. Observe that, bounding crudely, there are at most $n^{d}$ possible choices of the sizes $|A_0|, |A_1|, \ldots, |A_d|$ (after selecting $|A_0|, |A_1|, \ldots, |A_{d-1}|$, we have $|A_d| = n - (|A_0| + |A_1| + \ldots + |A_{d-1}|)$). For each such choice of $|A_0|, |A_1|, \ldots, |A_d|$, there are at most $n^{|A_0|+|A_1|+ \ldots + |A_{d-1}|}$ possible choices of partitions. Hence, in total, we will want to show that the probability that for a given partition our lower bound on the Cheeger constant does not hold is
\[
o \left ( n^{-(d+|A_0|+|A_1|+ \ldots + |A_{d-1}|)} \right ).
\]
We will show that in this case with probability $1-o \left ( n^{-(d+|A_0|+|A_1|+ \ldots + |A_{d-1}|)} \right )$ we have
\[
\frac{n \cdot |F(A_0, A_1, \ldots, A_d)|}{\prod_{i=0}^d |A_i|} \geq (1+\eps) \min \left \{1, 0.33\eps \right \} \log n 
\stackrel{Lemma~\ref{lem:a_bound}}{\geq} (1+\eps) a(\eps) \log n.
\]

Since $|A_d| = (1-o(1))n$, we have
\begin{align*}
& \Prob{n \cdot |F(A_0, A_1, \ldots, A_d)| \leq (1+\eps) \min \left \{1, 0.33\eps \right \} \prod_{i=0}^d |A_i| \log n} \\
 & \qquad \leq \Prob{\Bin \left ((1-o(1)) n \prod_{i=0}^{d-1} |A_i|, (1+\eps)  \frac{d \log n}{n} \right ) \leq (1+\eps) \min \left \{1, 0.33\eps \right \} \prod_{i=0}^{d-1} |A_i| \log n} \\
 & \qquad \leq \exp \left ( - \left ( \frac{(1-o(1))(1+\eps)d - (1+\eps) \min \left \{1, 0.33\eps \right \}}{(1-o(1))(1+\eps)d} \right ) ^2 (1+\eps) d  \log n \prod_{i=0}^{d-1} |A_i| / 2 \right ) \\
 & \qquad = \exp \left ( - \left ( \frac{d - (1+o(1)) \min \left \{1, 0.33\eps \right \}}{d} \right ) ^2 (1+\eps) d  \log n \prod_{i=0}^{d-1} |A_i| / 2 \right ).
\end{align*}
We note that, since $d \geq 2$, if $\eps > 3.01$ then for $n$ large enough we can bound
\begin{align*}
 \left ( \frac{d - (1+o(1)) \min \left \{1, 0.33\eps \right \}}{d} \right ) ^2 (1+\eps) & \geq \left ( \frac{d - (1+o(1))}{d} \right ) ^2 (1+\eps) \\
  & \geq \left ( \frac{1-o(1)}{2} \right ) ^2 \cdot 4.01 > 1.001.
\end{align*}
On the other hand, if $\eps \in (0,3.01]$ then we have that
\begin{align*}
 \left ( \frac{d - \min \left \{1, 0.33\eps \right \}}{d} \right ) ^2 (1+\eps) - 1 & = \left ( 1 - 0.33\eps/d \right ) ^2 (1+\eps) - 1 \\
  & \geq \left ( 1 - 0.33\eps/2 \right ) ^2 (1+\eps) - 1.
\end{align*}
As a polynomial in $\eps$, this is positive on $(0,3.04)$. So for any $\eps \in (0,3.01]$, if $n$ is large enough then we also have $\left ( \frac{d - (1+o(1)) \min \left \{1, 0.33\eps \right \}}{d} \right ) ^2 (1+\eps) = 1+\delta_\eps > 1$.

Since for all $0 \leq i \leq d-2$ we have $|A_i| \geq 1$, we can set $|A_0|+|A_1|+ \ldots + |A_{d-1}| = d-1+M$ for some $M \geq C(d, \eps)$. Given that, the minimum value of $\prod_{i=0}^{d-1} |A_i|$ is attained when $|A_0| = \ldots = |A_{d-2}| = 1$ and $|A_{d-1}| = M$. This gives
\begin{align*}
\min \{ 1.001, 1+\delta_\eps \} \frac{d}{2} \prod_{i=0}^{d-1} |A_i| & \geq \min \{ 1.001, 1+\delta_\eps \} \frac{dM}{2} \\
 & \geq \min \{ 1.001, 1+\delta_\eps \} M \geq 2d + M
\end{align*}
when $M \geq \max \{2000d, 2d/\delta_\eps\}$. Hence we obtain
\begin{eqnarray*}
 &&\Prob{n \cdot |F(A_0, A_1, \ldots, A_d)| \leq (1+\eps) \min \left \{1, 0.33\eps \right \} \prod_{i=0}^d |A_i| \log n}\leq \\
 & & \hspace{5cm} \exp \left ( - \left (d + 1 +\sum_{i=0}^{d-1} |A_i| \right ) \log n\right ).
\end{eqnarray*}
Hence, the union bound completes the proof for this case.
%
%we see that in this case with probability $1-o(1)$ we have
%\[
%\frac{n \cdot |F(A_0, A_1, \ldots, A_d)|}{\prod_{i=0}^d |A_i|} \geq (1+\eps) \min \left \{1, 0.33\eps \right \} \log n 
%\stackrel{Lemma~\ref{lem:a_bound}}{\geq} (1+\eps) a(\eps) \log n.
%\]

The final remaining case is when $|A_{d-1}| < C(d, \eps)$ for $C(d, \eps) > 0$ constant. In this case we can use Lemma \ref{lem:minDegree}. Since we know that w.h.p. the minimum co-degree of a $d$-element set is at least $(1+\eps)a(\eps)d \log n - C \sqrt{\log n}$, 
it follows that  w.h.p. for any selection of elements $a_0 \in A_0, \ldots, a_{d-1} \in A_{d-1}$ there are at least $(1+\eps)a(\eps)d \log n - C \sqrt{\log n} - d C(d, \eps)$ faces $\{a_0, \ldots, a_{d-1}, a_d \}$ with $a_d \in A_d = [n] \setminus \bigcup_{i=0}^{d-1} A_i$. Thus w.h.p. for any such partition we have
\begin{align*}
\frac{n \cdot |F(A_0, A_1, \ldots, A_d)|}{\prod_{i=0}^d |A_i|} & \geq \frac{n \prod_{i=0}^{d-1} |A_i| ((1+\eps)a(\eps)d \log n - C \sqrt{\log n} - d C(d, \eps))}{n \prod_{i=0}^{d-1} |A_i|} \\
& \geq (1+\eps)a(\eps)d \log n - 2C \sqrt{\log n}.
\end{align*}
This completes the proof of Theorem \ref{thm:mainCheeger}.
\end{proof}

\section{Algebraic expansion}
\label{sec:laplacians}

Recall from \eqref{eq:gap-def} that for a $d$-dimensional simplicial complex $Y$ with complete skeleton, the \emph{spectral gap} $\lambda(Y)$ is the minimal eigenvalue of the upper Laplacian on $(d-1)$-cycles, i.e.,
\[
\lambda(Y) = \min \mathrm{Spec} \left ( \Delta^+ |_{Z_{d-1}} \right ).
\]

As our main result in this section, we prove the following theorem about the spectral gap of $Y(n,p;d)$.

\begin{thm}
\label{thm:laplacians}
Let $Y = Y(n,p;d)$ for $d \geq 2$ and $p = \frac{(1+\eps)d \log n}{n}$, where $\eps >0$. 
There exists a constant $C > 0$ such that w.h.p. the spectral gap of $Y$ satisfies
\begin{equation}
\label{eqn:spectral}
 \delta (Y)  - C \sqrt{\log n} \leq \lambda(Y) \leq \delta (Y).
\end{equation}
\end{thm}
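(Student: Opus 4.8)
The plan is to establish the two inequalities in \eqref{eqn:spectral} separately. The upper bound $\lambda(Y) \leq \delta(Y)$ is immediate: combining Theorem~\ref{thm:cheegerIneq} with Theorem~\ref{thm:mainCheeger} gives $\lambda(Y) \leq h(Y) \leq \delta(Y)$ w.h.p. So the entire content of the theorem is the lower bound $\lambda(Y) \geq \delta(Y) - C\sqrt{\log n}$, and this is where essentially all the work goes.

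For the lower bound I would follow the Garland-type decomposition used by Gundert and Wagner~\cite{gundert2016eigenvalues}. The starting point is that for $f \in Z_{d-1}(Y)$ (so $\partial_{d-1} f = 0$, whence $\Delta^+ f = \Delta f$), the Rayleigh quotient $\langle \Delta^+ f, f\rangle / \langle f, f\rangle$ can be written as an average over the $(d-2)$-faces $\tau$ of $Y$ of local contributions coming from the link graph $\mathrm{lk}(\tau)$: for each $(d-2)$-face $\tau$, the vertices $v$ with $\tau \cup \{v\} \in Y^{(d-1)}$ form the vertex set of a graph in which $\{u,v\}$ is an edge precisely when $\tau \cup \{u,v\} \in Y^{(d)}$, and $f$ restricted around $\tau$ descends to a function on that vertex set. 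The algebraic identity (Garland's method) expresses $\langle \Delta^+ f, f \rangle$ as $\sum_\tau$ of the graph-Laplacian quadratic form of $\mathrm{lk}(\tau)$ applied to the localisation $f_\tau$, up to the correction term coming from $f$ possibly not being locally constant — this correction is what is controlled by the \emph{adjacency} matrices of the link graphs rather than their Laplacians. Concretely, one gets a bound of the shape $\lambda(Y) \geq \min_\tau \lambda(\mathrm{lk}(\tau)) - \max_\tau \|A_{\mathrm{lk}(\tau)} - \overline{\deg}_\tau \cdot J/n'\|$ or similar, where the second term measures the deviation of each link graph's adjacency matrix from that of a "typical" $G(n',p)$.

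Next I would identify the two ingredients. First, each link graph $\mathrm{lk}(\tau)$ in $Y(n,p;d)$ is distributed as $G(n-d+1, p)$ with $(n-d+1)p = (1+o(1))(1+\eps)\log n$ (up to the lower-order shift), so by the argument of Kolokolnikov–Osting–Von Brecht (our Lemma~\ref{lem:minDegree} applied in dimension~1, or rather its graph analogue quoted in~\eqref{eq:lambda_Gnp}) we have $\lambda(\mathrm{lk}(\tau)) \geq d_{\min}(\mathrm{lk}(\tau)) - O(\sqrt{\log n})$; and since the minimum degree of $\mathrm{lk}(\tau)$ relates to co-degrees of $(d-1)$-faces of $Y$, we can lower-bound $\min_\tau \lambda(\mathrm{lk}(\tau))$ by $\delta(Y) - O(\sqrt{\log n})$ — but this must be done carefully and via a union bound over all $\binom{n}{d-1}$ faces $\tau$, which is why we need the failure probability for each individual link to be $n^{-(d-1)-\Omega(1)}$, exactly the kind of estimate Lemma~\ref{lem:minDegree} is set up to give. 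Second, for the correction term I would invoke the sharp spectral bounds of Feige and Ofek~\cite{ar:FeigeOfek2005} on $G(n,p)$ at $np = \Theta(\log n)$: after deleting a negligible set of high-degree vertices (or on the relevant subspace), the adjacency matrix of $G(n',p)$ has its non-trivial eigenvalues of order $O(\sqrt{np}) = O(\sqrt{\log n})$, so the correction term is $O(\sqrt{\log n})$, absorbed into the constant $C$. Again this needs a union bound over all $(d-2)$-faces, so I would need the Feige–Ofek bound in the form of a probability estimate that beats $n^{-(d-1)}$.

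The main obstacle is bookkeeping in the Garland decomposition so that the quoted one-dimensional results (the $G(n,p)$ spectral-gap bound and the Feige–Ofek adjacency bound) apply \emph{simultaneously} to all links with a union bound that closes — i.e. verifying that the per-link failure probabilities are small enough against the number of links, and that the localisation map $f \mapsto (f_\tau)_\tau$ is a genuine isometry (up to the known multiplicities) so that no dimension-dependent loss creeps into the constant in front of $\delta(Y)$. Handling the passage from "$\lambda$ of each link" and "min degree of each link" to "$\delta(Y)$" (as opposed to a possibly larger local minimum degree) also requires a small argument: one must check that the minimum over links of the minimum link-degree is not much smaller than $\delta(Y)$, which again reduces to the concentration of co-degrees already established in Section~\ref{sec:min_codeg}.
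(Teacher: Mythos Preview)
Your upper bound and your overall strategy (Garland decomposition plus Feige--Ofek on the links) are right, but the concrete lower-bound route you sketch has a genuine gap. You propose to extract $\min_\tau \lambda(\mathrm{lk}\,\tau)$ from Garland and then convert this to $\delta(Y)$ via the Kolokolnikov--Osting--Von Brecht result~\eqref{eq:lambda_Gnp}. But Garland's identity (Theorem~\ref{thm:garland}, part~1) reads
\[
\langle \Delta^+ f, f\rangle \;=\; \sum_{\tau\in Y^{(d-2)}} \langle \Delta^+_{\mathrm{lk}\,\tau} f_\tau, f_\tau\rangle \;-\; (d-1)\,\langle Df, f\rangle,
\]
so bounding the first sum below by $\min_\tau \lambda(\mathrm{lk}\,\tau)\cdot d\langle f,f\rangle$ still leaves you with the subtractive term $(d-1)\langle Df,f\rangle$, which can be as large as $(d-1)\Delta(Y)\langle f,f\rangle$ where $\Delta(Y)$ is the \emph{maximum} co-degree. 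You would end up with $\lambda(Y)\ge d\,\delta(Y) - (d-1)\Delta(Y) - O(\sqrt{\log n})$, and since $\delta(Y)=a(\eps)\,np$ with $a(\eps)<1$ while $\Delta(Y)\sim np$, this is negative. The bound shape you wrote, $\lambda(Y)\ge \min_\tau \lambda(\mathrm{lk}\,\tau) - \|A_{\mathrm{lk}\,\tau}-\cdots\|$, does not follow from Garland; the correction in Garland is a \emph{degree} term, not an adjacency term.

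The paper sidesteps this entirely and never invokes $\lambda(\mathrm{lk}\,\tau)$ or the Kolokolnikov et al.\ result. Instead it rewrites $\langle Df,f\rangle = \tfrac{1}{d}\sum_\tau \langle D_{\mathrm{lk}\,\tau} f_\tau, f_\tau\rangle$ (Claim~\ref{claim:D_decomp}) and splits each $\Delta^+_{\mathrm{lk}\,\tau}=D_{\mathrm{lk}\,\tau}-A_{\mathrm{lk}\,\tau}$, so that the $(d-1)D$ correction cancels against the degree part of the link Laplacians, leaving
\[
\langle \Delta^+ f, f\rangle \;=\; \sum_{\tau}\Bigl(\tfrac{1}{d}\langle D_{\mathrm{lk}\,\tau} f_\tau, f_\tau\rangle - \langle A_{\mathrm{lk}\,\tau} f_\tau, f_\tau\rangle\Bigr).
\]
Now $\delta(Y)$ drops out of the diagonal part trivially (each diagonal entry of $D_{\mathrm{lk}\,\tau}$ is a co-degree $\ge\delta(Y)$, and part~4 of Theorem~\ref{thm:garland} gives $\tfrac{1}{d}\sum_\tau\langle f_\tau,f_\tau\rangle=\langle f,f\rangle$), while only the adjacency quadratic forms $\langle A_{\mathrm{lk}\,\tau}f_\tau,f_\tau\rangle$ need Feige--Ofek, applied with failure probability $n^{-d}$ per link to survive the union bound over the $O(n^{d-1})$ faces $\tau$. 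Your second ingredient (Feige--Ofek with polynomially small failure probability) is exactly what is used; your first ingredient (link spectral gaps via Kolokolnikov et al.) is not needed and, as you outlined it, would not close. Incidentally, your worry that $\min_\tau d_{\min}(\mathrm{lk}\,\tau)$ might differ from $\delta(Y)$ is unfounded: the degree of $v$ in $\mathrm{lk}\,\tau$ equals $\deg(v\tau)$, so the two minima coincide exactly.
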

\begin{proof}
The upper bound in \eqref{eqn:spectral} follows immediately from Theorem~\ref{thm:mainCheeger} and through Theorem \ref{thm:cheegerIneq}. Recall that the latter states that 
$\lambda (Y) \leq h(Y)$. Furthermore, at the beginning of the proof of Theorem~\ref{thm:mainCheeger}, we observed that $h(Y) \leq \delta (Y)$.  

We now focus on the lower bound on $\lambda(Y)$. We will give a lower bound on $\langle \Delta^+ f, f\rangle$ for $f \in Z_{d-1}(Y)$ which relies on a decomposition of $\Delta^+$ into the Laplace operators 
of the link graphs of all $d-2$-faces of $Y$. 

For a $(d-2)$-dimensional face $\tau$ in $Y^{(d-2)}$, let $\lk \tau$ be the \emph{link graph} of face $\tau$, i.e., the graph with vertex set $Y^{(0)} \setminus \tau$, with $u,v \in Y^{(0)} \setminus \tau$ forming an edge if $\tau \cup \{u,v\} \in Y^{(d)}$. Note that when $\tau$ is a $(d-2)$-dimensional face in $Y(n,p;d)$, then $\lk \tau$ is a random graph with the $G(n-d+1,p)$ distribution.

For $\sigma \in Y^{(d-1)}$, we let $\dgg{\tau}{\sigma} = |\{\sigma' \in Y^{(d-1)} \ : \ \sigma' \sim \sigma, \ \tau \subset \sigma' \}|$. For convenience we write $\Omega^{(j)}$ for $\Omega^{(j)} (Y;\R)$.
We define the localised (on $\tau \in Y^{(d-2)}$) upper Laplace operator 
$\Delta_\tau^+ : \Omega^{(d-1)} \to \Omega^{(d-1)}$ ;
for any $f \in \Omega^{(d-1)}$ and any $\sigma \in Y^{(d-1)}$ we set
\begin{equation} \label{eq:def_loc_Lapl}
(\Delta_\tau^+ f)(\sigma) = \begin{cases} 
\dgg{\tau}{\sigma} - \sum_{\sigma' : \sigma' \sim \sigma, \tau\subset \sigma'} f(\sigma') &
\mbox{if $\tau \subset \sigma$} \\ 
0 & \mbox{$\tau \not \subset \sigma$}
\end{cases}.
\end{equation}
Furthermore, for a form $f \in \Omega^{(d-1)}$ and a face $\tau \in Y^{(d-2)}$, we define 
$f_\tau : (\lk \tau)^{(0)} \to \R$ as $f_\tau (v) = f(v\tau)$. 
The operator $\Delta_\tau^+$ has the same effect as the Laplace operator associated with 
$\lk \tau$ (see 2. in the following theorem). 
This is made precise in the following result by Garland \cite{garland1973p} (we state it as in Lemma 4.2 
in~\cite{parzanchevski2016isoperimetric}). 
\begin{thm}
\label{thm:garland}
Let $X$ be a $d$-dimensional simplicial complex and $f \in \Omega^{(d-1)}$.
Then the following hold. 
\begin{enumerate}
\item[1.] $\Delta^+  = \sum_{\tau \in X^{(d-2)}} \Delta_\tau^+ - (d-1)D$, where 
$(Df)(\sigma) = \dgr{\sigma} f(\sigma)$.
\item[2.] For any $\tau \in X^{(d-2)}$, \ $\langle \Delta_{\tau}^+ f,f\rangle = \langle\Delta^+_{\lk \tau} f_\tau , f_\tau \rangle$.
\item[3.] If $f \in Z_{d-1}(X)$, then $f_\tau \in Z_0 (\lk \tau)$.
\item[4.] $\sum_{\tau \in X^{(d-2)}} \langle f_\tau, f_\tau\rangle = d \langle f,f \rangle$.
\end{enumerate}
\end{thm}
For $f\in Z_{d-1} (Y)$ with $f \not = 0$, we seek a lower bound on $\langle \Delta^+ f,f\rangle$. 
We will use the decomposition of $\Delta^+$ 
in terms of the local Laplace operators as in 1. of the above theorem. To make use of this, we will rewrite 
the operator $D$ as a sum of localised versions of it. In particular, we write 
$D_{\lk \tau} : \Omega^{(0)} (\lk \tau;\R) \to 
\Omega^{(0)} (\lk \tau;\R)$ for the operator such that for $f' \in \Omega^{(0)} (\lk \tau;\R)$  and $v \in \lk \tau^{(0)}$
we have $(D_{\lk \tau} f')(v) = \dgg{\tau}{v\tau} f'(v)$. So, in particular, if $f \in \Omega^{(d-1)}$, then 
$(D_{\lk \tau} f_\tau )(v) = \dgg{\tau}{v\tau} f(v\tau)$.
The following holds. 
\begin{clm} \label{claim:D_decomp}
For any $f \in \Omega^{(d-1)} (Y)$, we have 
$$ \langle Df,f\rangle = \frac{1}{d} \sum_{\tau \in Y^{(d-2)}} \langle D_{\lk \tau}f_\tau,f_\tau \rangle. $$
\end{clm}
\begin{proof} Let $f \in \Omega^{(d-1)} (Y)$. We write
\begin{eqnarray}
\langle Df,f \rangle &=& \sum_{\sigma \in Y^{(d-1)}} \dgr{\sigma} f^2(\sigma) = 
\frac{1}{d} \sum_{\tau \in Y^{(d-2)}} \sum_{v \in \lk \tau^{(0)}} \dgr{v\tau} f^2 (v\tau) \nonumber \\ 
&=& \frac{1}{d} \sum_{\tau \in Y^{(d-2)}} \sum_{v \in \lk \tau^{(0)}} \dgg{\tau}{v} f_\tau ^2 (v) \nonumber \\
&=& \frac{1}{d} \sum_{\tau \in Y^{(d-2)}} \langle D_{\lk \tau} f_\tau, f_\tau \rangle. \nonumber
\end{eqnarray}
\end{proof}

Note that if $A_{\lk \tau}$ is the adjacency matrix of $\lk \tau$, then 
$$ \Delta^+_{\lk \tau} = D_{\lk \tau} - A_{\lk \tau}. $$
Hence using Theorem~\ref{thm:garland} parts 1. and 2., for any $f \in \Omega^{(d-1)}$ we can write
\begin{eqnarray}  
\langle \Delta^+ f, f\rangle &\stackrel{Thm~\ref{thm:garland}~1.}{=}&  \sum_{\tau \in Y^{(d-2)}}\langle \Delta_\tau^+f,f\rangle - (d-1)\langle Df,f\rangle =  \nonumber \\
&\stackrel{Thm~\ref{thm:garland}~2., \ Claim~\ref{claim:D_decomp}}{=}& \sum_{\tau \in Y^{(d-2)}} 
\langle \Delta_{\lk \tau}^+ f_\tau,f_\tau \rangle - \frac{d-1}{d} \sum_{\tau \in Y^{(d-2)}} \langle D_{\lk \tau}f_\tau,f_\tau\rangle \nonumber \\ 
&=& \sum_{\tau \in Y^{(d-2)}} \langle (D_{\lk \tau} - A_{\lk \tau}) f_\tau,f_\tau\rangle - \frac{d-1}{d} \sum_{\tau \in Y^{(d-2)}} \langle D_{\lk \tau}f_\tau,f_\tau\rangle  \nonumber \\ 
&=& \sum_{\tau \in Y^{(d-2)}} \left(\langle (D_{\lk \tau} - A_{\lk \tau}) f_\tau,f_\tau\rangle  -   
\frac{d-1}{d} \langle D_{\lk \tau}f_\tau,f_\tau\rangle \right) \nonumber \\
&=& \sum_{\tau \in Y^{(d-2)}} \left(\frac{1}{d} \langle D_{\lk \tau} f_\tau, f_\tau\rangle - \langle A_{\lk \tau} f_\tau, f_\tau \rangle \right).
\label{eq:Laplace_decomp}
\end{eqnarray}
But note that 
$$ \langle D_{\lk \tau} f_\tau, f_\tau\rangle = \sum_{v \in \lk \tau^0} \dgg{\tau}{v\tau} f_\tau (v)^2 \geq \delta (Y) \langle f_\tau, f_\tau \rangle. $$
Thereby, 
\begin{eqnarray}
\frac{1}{d}  \sum_{\tau \in Y^{(d-2)}} \langle D_{\lk \tau} f_\tau, f_\tau \rangle &\geq&  \delta(Y) \cdot \frac{1}{d}
\sum_{\tau \in Y^{(d-2)}} \langle f_\tau, f_\tau\rangle  \stackrel{Thm~\ref{thm:garland}~4.}{=} \delta (Y) \frac{d}{d} \langle f,f\rangle \nonumber \\
&=& \delta(Y) \langle f,f \rangle.  \label{eq:Ds_lowerbound}
\end{eqnarray}
Now, we will give an upper bound on $\sum_{\tau \in Y^{(d-2)}} \langle A_{\lk \tau} f_\tau, f_\tau \rangle$, for $Y=Y(n,p;d)$ and 
$f\in Z_{d-1}$. 
As we observed above, $\lk \tau$ is distributed as a $G(n- d+1,p)$ random graph for any 
$\tau \in Y^{(d-2)} (n,p;d)$. Of course, these random graphs are not independent. 

Hence, the simplest way to bound from above this sum is to give an upper bound on $\langle A_{\lk \tau} f_\tau, f_\tau\rangle$
that holds with probability $1- o(n^{-(d-1)})$ and apply the union bound over all $\tau \in Y^{(d-2)} (n,p;d)$, which 
there are $O(n^{d-1})$ of them. 

To this end, we will use some results by Feige and Ofek~\cite{ar:FeigeOfek2005} on the spectrum of the 
adjacency matrix of $G(n,p)$. 

\subsubsection*{The adjacency matrix of $G(n,p)$ and its spectral gap} 
In brief, Feige and Ofek~\cite{ar:FeigeOfek2005} showed that the second largest eignvalue of $G(n,p)$ is 
$O(\sqrt{np})$, provided that $np = \Omega (\log n)$. Let $A$ be the adjacency matrix of $G(n,p)$. 
If $G(n,p)$ were regular, then the all-1s vector $\mathbf{1}$ would span the eigenspace of the leading eigenvalue
and the above result would imply that $\langle Af,f\rangle = O(\sqrt{np}) \langle f,f\rangle$ for any function $f$ on the vertex set of $G(n,p)$ such that $f \perp \mathbf{1}$. However, $G(n,p)$ is not regular but almost regular 
in the sense that for any $\eps >0$ w.h.p. most vertices have degrees $np(1\pm \eps)$. 
Feige and Ofek~\cite{ar:FeigeOfek2005} proved that despite this, the bound on this quadratic form still 
holds w.h.p. 

To state these more precisely, let 
$S=\{ f : [n] \to \R : f \perp \mathbf{1} , \langle f , f\rangle \leq 1\} $ and fixing $0 < \delta <1$ we let
$$ T= \{ f : [n] \to \R : f \in \left\{  \frac{\delta}{\sqrt{n}} \Z \right\}^{[n]} , \ \langle f , f\rangle \leq 1\}. $$
The main theorem in~\cite{ar:FeigeOfek2005} is as follows. 
\begin{thm}[Theorem 2.5 in~\cite{ar:FeigeOfek2005}] \label{thm:FeigeOfek_main} 
Let $A$ be the adjacency matrix on $G(n,p)$, where $c_0 \frac{\log n}{n} \leq  p \leq \frac{n^{1/3}}{n(\log n)^{1/3}}
$. For every $c>0$, there exists $c'>0$ such that with probability at least $1 - n^{-c}$, for any 
$f,f' \in T$ we have 
$$| \langle Af, f'\rangle | \leq c' \sqrt{np}. $$
\end{thm}
Furthermore, the authors state and prove the following claim. 
\begin{clm} [Claim 2.4 in~\cite{ar:FeigeOfek2005}]\label{clm:Feige-Ofek 2.4} 
Suppose that for some $c >0$ we have $|\langle Af , f'\rangle |< c \sqrt{np}$, for any $f,f' \in T$. 
Then for any $f \in S$, we have $\langle Af, f \rangle < \frac{c}{(1-\delta)^2} \sqrt{np}$.  
\end{clm}
From the above two statements the following result is deduced. 
 \begin{thm} \label{thm:adj_Gnp} 
Let $A$ be the adjacency matrix on $G(n,p)$, where $c_0 \frac{\log n}{n} \leq  p \leq \frac{n^{1/3}}{n(\log n)^{1/3}}
$. For every $c>0$, there exists $c''>0$ such that with probability at least $1 - n^{-c}$, for any 
$f \in S$ we have 
$$ \langle Af, f \rangle \leq c'' \sqrt{np}. $$
\end{thm}

\subsubsection*{Deducing the lower bound on $\lambda (Y(n,p;d))$.}

We apply this in our setting, recalling that for any $\tau \in Y^{(d-2)}$, the link graph 
$\lk \tau$ is distributed as $G(n-d+1,p)$ with $p = \frac{(1+\eps) d \log n}{n}$. 
Taking $c=d$ in Theorem~\ref{thm:adj_Gnp}, we deduce that for some constant $c_d''>0$ 
with probability at least $1- n^{-d}$ for any
$f_\tau \in Z_0 (\lk \tau)$, we have 
$$\langle A_{\lk \tau} f_\tau, f_\tau \rangle \leq c_d'' \sqrt{np} \langle f_\tau, f_\tau \rangle. $$
The union bound (over all $\tau \in Y^{(d-2)}$) implies that for any $f\in Z_{d-1}(Y)$ we have w.h.p. 
\begin{equation} \label{eq:adjacency_sum}  
\sum_{\tau \in Y^{(d-2)}} \langle A_{\lk}f_\tau, f_\tau \rangle \leq 
c_d''\sqrt{np} \sum_{\tau \in Y^{(d-2)}} \langle f_\tau, f_\tau \rangle \stackrel{Thm~\ref{thm:garland}~4.}{=} 
d c_d'' \sqrt{np} \langle f, f\rangle.
\end{equation}
Using the lower bound of~\eqref{eq:Ds_lowerbound} and the upper bound of~\eqref{eq:adjacency_sum}, 
Equation~\eqref{eq:Laplace_decomp} yields that w.h.p. for any $f\in Z_{d-1} (Y)$ we have 
$$ \langle \Delta^+ f, f \rangle \geq \left(\delta (Y) - d c_d'' \sqrt{np} \right) \langle f, f \rangle.$$
But Lemma~\ref{lem:minDegree} states that $\delta (Y) \geq (1+ \eps) a d \log n - C\sqrt{\log n}$ w.h.p. 
for some $C>0$, where $a=a(\eps)$ is the solution of~\eqref{eqn:aDefn}. 
Hence, for some $C'>0$ w.h.p. for any $f \in Z_{d-1} (Y)$ such that $f \not = 0$ we have 
$$ \frac{ \langle \Delta^+ f, f \rangle}{  \langle f, f \rangle} \geq \delta (Y) - C' \sqrt{\log n}. $$
Therefore, w.h.p.
$$\lambda (Y) \geq  \delta (Y) - C' \sqrt{\log n}.$$
\end{proof}

\section{Combinatorial expansion} \label{sec:conductance}
%\normalem
Our lower bound on $\Phi_Y$ will rely on a lower bound on the edge expansion of the graph $(Y^{(d-1)}, E^{(d-1)})$, where the edge set $E^{(d-1)}$  consists of those distinct pairs $\sigma , \sigma' \in Y^{(d-1)}$ for which there exists 
a $d$-face $\rho \in Y^{(d)}$ which contains both $\sigma$ and $\sigma'$. We write $\sigma \sim \sigma'$. 
Hence, given a subset $S \subset Y^{(d-1)}$, we would like to bound from below the number of $d$-faces which
contain at least one $d-1$-face in $S$ and a $d-1$-face not in $S$. In other words, we would like to provide 
a lower bound on the number of $d$-faces which are potential \emph{exits} for a random walk that starts 
inside $S$. 

To express these more precisely, we introduce some relevant notation. 
For a $d$-dimensional complex $Y$ and $0 \leq k \leq d-1$, given $S \subset Y^{(k)}$ let
\[
 \partial^+ S = \{\rho \subset  Y^{(k+1)} \ : \ \mbox{there exists \ } \sigma \in S \mbox{ such that } \sigma \subset 
 \rho \}.
\]
%Analogously, for $1 \leq k \leq d$ and $S \subset Y^{(k)}$,  
Analogously to the oriented case, for $\sigma \in Y^{(k)}$ where $1\leq k\leq d$ we define
\[
 \partial \sigma = \{\tau \in Y^{(k-1)} : \tau \subset \sigma \}.
 % \mbox{there exists } \gamma \in S \mbox{ such that } \sigma \subset \gamma \}.
\] 

For $S\subset Y^{(d-1)}$ we let $\overline{S} = Y^{(d-1)} \setminus S$. We write
\begin{eqnarray*}
Q(S, \overline{S}) &=& \sum_{\sigma \in S} \sum_{\sigma' \in \overline{S} : \sigma' \sim \sigma} \pi(\sigma) \cdot \frac{1}{d \cdot \dgr{\sigma}} =  \frac{1}{d(d+1)\cdot |Y^{(d)}|}\sum_{\sigma \in S} \sum_{\sigma' \in \overline{S} : \sigma' \sim \sigma} \dgr{\sigma} \cdot \frac{1}{\dgr{\sigma}} \\
&=& \frac{1}{d(d+1) \cdot |Y^{(d)}|}\sum_{\sigma \in S} \sum_{\sigma' \in \overline{S} : \sigma' \sim \sigma} 1.
\end{eqnarray*}
With $B_S = \{ \sigma \in \partial^+ S : \partial \sigma \subset S \}$, 
we bound
$$ \sum_{\sigma \in S} \sum_{\sigma' \in \overline{S} : \sigma' \sim \sigma} 1 \geq |\partial^+S  \setminus B_S|.$$
The latter is the number of $d$-faces that are exits out of the set $S$. 

Furthermore, 
$$ \pi (S) = \frac{\sum_{\sigma \in S} \dgr{\sigma}}{d \cdot |Y^{(d)}|} \leq \frac{d\cdot  | \partial^+S |}{(d+1) \cdot |Y^{(d)}|}< \frac{| \partial^+S|}{|Y^{(d)}|}.$$
Hence, 
$$ \Phi_Y(S) \geq \frac{Q(S,\overline{S})}{\pi (S)} \geq \frac{1}{d(d+1)} \cdot \frac{|\partial^+S\setminus B_S|}{| \partial^+S|},$$
whereby 
\begin{equation} \label{eq:conductance_lower} 
\Phi_Y \geq \frac{1}{d(d+1)} \cdot \min_{S\subset Y^{(d)}: 0 < \pi (S) \leq \frac12} \frac{|\partial^+S\setminus B_S|}{| \partial^+S|}.
\end{equation}

In our proof, we will in fact use an upper bound on the number of $d$-faces in $\partial^+ S$ which are not exits. 
These are $d$-faces whose $d-1$-subsets are all faces belonging to $S$. To bound their number from above,  
we will use a weak version of the Kruskal-Katona theorem. This provides an upper bound on the number of 
complete subgraphs on a hypergraph with a given number of hyperedges. To apply this to our context, we consider the hypergraph spanned by the $d-1$-faces in $S$. The number of the complete $d+1$-subhypergraphs of this hypergraph is an upper bound on the number of $d$-faces in $\partial^+S$ which 
are not exits. 

Note that two $d-1$-faces $\sigma, \gamma \in Y^{(d-1)}$ which are \emph{neighbours}, that is, 
$\sigma \sim \sigma'$, satisfy $|\sigma \cap \gamma| = d-1$. Given $X \subseteq Y^{(d-1)}$, we say that $X$ is \emph{tightly connected} if for any distinct $\sigma, \sigma' \in X$ there exists $m \geq 1$ and a sequence $\sigma = \delta_0, \ldots, \delta_m = \sigma' \in X$ such that $\delta_i \sim \delta_{i+1}$ for all $0 \leq i \leq m-1$.
As we observed earlier, the random walk we consider walks over tightly connected sets. 
To bound the quantity on the right-hand side of~\eqref{eq:conductance_lower}, 
we start with the following theorem, which 
considers only subsets $S \subset Y^{(d-1)}$ which are tightly connected and $\pi (S) \leq 1/2$.  
In fact, it suffices to consider those subsets $S$ with $|S| \leq \frac{1}{2}{n \choose d}$. 
To see this, recall Lemma~\ref{lem:minDegree} implies that w.h.p. $\deg (\sigma) \geq d+1$. 
Hence, if $\pi (S) \leq 1/2$, then
$$ (d+1) |S| \leq \sum_{\sigma \in S} \dgr{\sigma}\leq \frac{d+1}{2} |Y^{(d)}| \leq \frac{d+1}{2} {n \choose d}. $$
This directly implies that $|S| \leq \frac{1}{2} {n \choose d}$. 
\begin{thm}
\label{thm:mainExpansion}
Let $Y = Y(n,p;d)$ where $np=(1+\eps)d\log n$ for $\eps>0$ fixed. There exists $\delta > 0$ such that w.h.p. the following holds. For any tightly connected set $S \subset Y^{(d-1)}$ with $|S| \leq \tfrac{1}{2} \binom{n}{d}$ and
\[
 B_S = \{ \sigma \in \partial^+ S : \partial \sigma \subset S \}
\]
we have
\begin{equation}
\label{eqn:mainExpansion}
 | (\partial^+S  \setminus B_S)\cap Y^{(d)} | \geq \delta | \partial^+S \cap Y^{(d)}|.
\end{equation}
\end{thm}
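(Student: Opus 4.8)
The reduction carried out in the excerpt lets us restrict attention to a tightly connected $S\subseteq Y^{(d-1)}$ with $|S|\le\tfrac12\binom nd$, and it suffices to exhibit a single $\delta>0$ that works for all such $S$ with high probability. Write $B_S=\{\rho\in Y^{(d)}:\partial\rho\subseteq S\}$ for the set of non-exit $d$-faces. The plan is to bound $|B_S|$ from above and $\sum_{\sigma\in S}\dgr{\sigma}$ from below, and then deduce \eqref{eqn:mainExpansion} by double counting. Indeed, counting pairs $(\sigma,\rho)$ with $\sigma\in S$, $\rho\in Y^{(d)}$ and $\sigma\subset\rho$ gives $\sum_{\sigma\in S}\dgr{\sigma}=\sum_{\rho\in Y^{(d)}}|\partial\rho\cap S|$; the pairs with $\rho\in B_S$ contribute exactly $(d+1)|B_S|$; and since every $\rho\in\partial^+S\cap Y^{(d)}$ has between $1$ and $d+1$ facets in $S$, we get $|\partial^+S\cap Y^{(d)}|\ge\tfrac1{d+1}\sum_{\sigma\in S}\dgr{\sigma}$, while $|(\partial^+S\setminus B_S)\cap Y^{(d)}|=|\partial^+S\cap Y^{(d)}|-|B_S|$. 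Hence, once we know that with high probability, for all $S$ as above, $(d+1)|B_S|\le(1-\delta)\sum_{\sigma\in S}\dgr{\sigma}$ for some fixed $\delta>0$, the bound $|B_S|\le(1-\delta)|\partial^+S\cap Y^{(d)}|$ follows (using $\sum_{\sigma\in S}\dgr{\sigma}\le(d+1)|\partial^+S\cap Y^{(d)}|$), and hence so does \eqref{eqn:mainExpansion}.

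Next, the Kruskal--Katona input. Note $B_S=Y^{(d)}\cap C_S$, where $C_S$ is the (deterministic, depending only on $S$) family of $(d+1)$-subsets of $[n]$ all of whose $d$-subsets lie in $S$; write $K_S=|C_S|$ for the number of $(d+1)$-cliques of the $d$-uniform hypergraph $S$. By the weak Kruskal--Katona theorem (Theorem~\ref{thm:kk}), $K_S\le\binom x{d+1}$ where $x$ is determined by $\binom xd=|S|$; since $|S|\le\tfrac12\binom nd$ this forces $x\le(1+o(1))\,2^{-1/d}n$, and therefore $(d+1)K_S\le(x-d)\,|S|\le(1+o(1))\,2^{-1/d}\,n\,|S|$. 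On the other hand $\mathbb E\big[\sum_{\sigma\in S}\dgr{\sigma}\big]=|S|(n-d)p$, while $\mathbb E\big[(d+1)|B_S|\big]=(d+1)pK_S$, so the ratio of these two expectations is at most $\tfrac{x-d}{n-d}\le(1+o(1))\,2^{-1/d}=:\beta<1$.

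It remains to upgrade this comparison of expectations to a high-probability comparison of the actual quantities, uniformly over $S$. Both $\sum_{\sigma\in S}\dgr{\sigma}=\sum_{\rho}|\partial\rho\cap S|\,\mathbf 1_{\{\rho\in Y^{(d)}\}}$ and $(d+1)|B_S|=(d+1)\sum_{\rho\in C_S}\mathbf 1_{\{\rho\in Y^{(d)}\}}$ are sums of independent contributions, one per $(d+1)$-subset $\rho$ of $[n]$, each lying in $\{0,1,\dots,d+1\}$, so a Chernoff bound shows that each is within a factor $1\pm\gamma$ of its mean except with probability $\exp\!\big(-\Omega(\gamma^2\cdot\text{mean})\big)$. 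Choosing $\gamma$ small enough that $\tfrac{(1+\gamma)\beta}{1-\gamma}\le1-\delta$ for some fixed $\delta>0$ (possible since $\beta<1$), we get $(d+1)|B_S|\le(1-\delta)\sum_{\sigma\in S}\dgr{\sigma}$ for a fixed $S$ with failure probability $n^{-\Omega(|S|)}$; and since the number of tightly connected sets of size $s$ is at most $\binom nd\,(e\,d(n-d))^{s-1}=n^{O(s)}$ (the standard bound on connected subsets of a bounded-degree graph), a union bound over $S$ and over $s$ finishes the proof.

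The genuinely delicate point — and the main obstacle — is making the $n^{-\Omega(|S|)}$ saving beat the $n^{O(|S|)}$ count \emph{uniformly} in $\eps>0$ and $d\ge2$, since the factor $\gamma^2$ in the Chernoff exponent is forced to be a small constant depending on $1-\beta=1-2^{-1/d}$. This calls for treating small and large $S$ separately. For small $|S|$ the expected number $pK_S$ of non-exit faces is only $n^{-\Omega(1)}$ times a power of $\log n$ (indeed $K_S=0$ once $|S|\le d$), so a cruder tail estimate on the Binomial $|B_S|$ already gives a superpolynomial-in-$n$ saving per $S$, comfortably beating the count. For large $|S|$ one first conditions on the high-probability events of Lemma~\ref{lem:minDegree} and on the fact that w.h.p. only $o\big(\binom nd\big)$ faces have co-degree below $(1-\gamma)np$; on this event $\sum_{\sigma\in S}\dgr{\sigma}\ge(1-o(1))|S|np$ holds deterministically, which both removes the need for a lower-tail bound and sharpens the exponent for the upper-tail bound on $(d+1)|B_S|$ enough to carry the union bound through.
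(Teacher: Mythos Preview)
Your overall strategy---double-counting $\sum_{\sigma\in S}\dgr{\sigma}$, bounding $K_S$ via the weak Kruskal--Katona theorem, and union-bounding over tightly connected $S$---matches the paper's approach, and the reduction to $(d+1)|B_S|\le(1-\delta)\sum_\sigma\dgr{\sigma}$ is a valid reformulation. However, the union bound does not close, and the two fixes you sketch at the end do not cover the full range of $|S|$.

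The concrete gap is this. Your Chernoff bound on either $\sum_\sigma\dgr{\sigma}$ or $(d+1)|B_S|$ gives failure probability at best $\exp(-c\gamma^2|S|\,np)$ for a fixed constant $c=c(d)$, while the tightly-connected count of size $|S|$ is $e^{(1+o(1))|S|\log n}$. Since $\gamma$ is forced below roughly $(1-2^{-1/d})/(1+2^{-1/d})$, for small $\eps$ one has $c\gamma^2(1+\eps)d<1$ and the union bound fails. Your ``small $|S|$'' fix asserts $pK_S=n^{-\Omega(1)}\cdot\mathrm{polylog}$, but this is only true when $|S|$ itself is $n^{o(1)}$; for $|S|=n^{\beta}$ with $0<\beta<d$ one has $K_S=\Theta(|S|^{(d+1)/d})$ in the worst case, so $pK_S$ is a positive power of $n$. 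Your ``large $|S|$'' fix---conditioning so that $\sum_\sigma\dgr{\sigma}\ge(1-o(1))|S|\,np$ holds deterministically---requires the $o\!\left(\binom nd\right)$ low-co-degree faces to be $o(|S|)$, hence only applies for $|S|$ within a small power of $\binom nd$; and even there the remaining upper-tail bound on $|B_S|$ (whose exponent is a small constant times $|S|\log n$) still loses to the $e^{(1+o(1))|S|\log n}$ tightly-connected count when $\eps$ is small. The intermediate range of $|S|$ is simply not covered.

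The idea you are missing, and which the paper uses, is to \emph{switch enumeration schemes according to $|S|$}. For $|S|\ge n^{d(1-\alpha)}$ the paper abandons tight connectivity and uses the crude count $\binom{\binom nd}{|S|}\le(e n^d/|S|)^{|S|}\le e^{(1+o(1))\alpha d\,|S|\log n}$; since $\alpha$ is at our disposal, this is beaten by the Chernoff exponent (a fixed small constant times $|S|\log n$) once $\alpha$ is chosen small enough. For $|S|<n^{d(1-\alpha)}$ the paper exploits Kruskal--Katona in the sharper form $K_S\le |S|\,n^{1-\alpha}$, so that $\mathbb E[|B_S|]$ is a factor $n^{-\alpha}$ below its target; it then uses a \emph{direct} estimate of the lower tail $\Prob{\mathrm{Bin}(\Theta(|S|n/d),p)\le k_0}$ with $k_0=o(|S|\,np)$ (not the $e^{-\gamma^2\mu/2}$ form, which would lose a factor $2$) to extract the full exponent $(1-o(1))(1+\eps)|S|\log n$. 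It is precisely the factor $1+\eps>1$ that then beats the tightly-connected count $e^{(1+o(1))|S|\log n}$.
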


\begin{proof}
We shall assume that $n$ is large enough for the estimates in the proof to hold. Given $S \subset Y^{(d-1)}$ and $1 \leq i \leq d+1$, let
\[
 F_i(S) = \{ \rho \in \partial^+S : | \partial \rho \cap S | = i \},
\]
and set $f_i(S) = |F_i(S)|$. Denoting $|S| = m$, by double counting, we have that
\begin{equation}
\label{eqn:fisum}
\sum_{i=1}^{d+1} i f_i(S) = m(n-d).
\end{equation}

For any $t \geq 2$, let $K_{t+1}^{(t)}$ be the complete $t$-uniform hypergraph on $t+1$ vertices, and for a $t$-uniform hypergraph $G$, let $K_{t+1}^{(t)}(G)$ denote the number of copies of $K_{t+1}^{(t)}$ in $G$. Note that in fact we have $f_{d+1}(S) = K_{d+1}^{(d)}(S)$, i.e., $f_{d+1}(S) = |B_S|$. Hence, we shall use the following weak form of Kruskal-Katona theorem (see Lov{\'a}sz \cite{lovaszcpe}).
\begin{thm}
\label{thm:kk}
Suppose $r \geq 1$ and $G$ is an $r$-uniform hypergraph with
\[
m = \binom{x_m}{r} = \frac{x_m(x_m-1)\ldots(x_m-r+1)}{r!}
\]
hyperedges, for some real number $x_m \geq r$. Then $K^{(r)}_{r+1}(G) \leq \binom{x_m}{r+1}$, with equality if and only if $x_m$ is an integer and $G = K^{(r)}_{x_m}$.
\end{thm}
Note that we can rewrite \eqref{eqn:fisum} to get
\begin{equation}
\label{eqn:fiineq}
\sum_{i=1}^{d} f_i(S) \geq \frac{1}{d} \left ( m(n-d) - (d+1)f_{d+1}(S) \right ) = \frac{nm}{d} \left ( 1-\frac{d}{n} - \frac{(d+1)f_{d+1}(S)}{nm} \right ) .
\end{equation}

Observe that $| \partial^+S \setminus B_S|$ follows the binomial distribution $\Bin(\sum_{i=1}^{d} f_i(S),p)$. 
By Theorem \ref{thm:kk} we have
\[
\frac{f_{d+1}(S)}{nm} \leq \frac{1}{n} \frac{\binom{x_m}{d+1}}{\binom{x_m}{d}} = \frac{x_m-d}{n(d+1)}.
\]
However, by assumption $x_m$ satisfies $m = \binom{x_m}{d} \geq \frac{(x_m-d)^d}{d!}$; thus, we obtain $x_m-d \leq (md!)^{1/d}$. This yields
\begin{equation}
\label{eqn:fd+1bound}
\frac{f_{d+1}(S)}{nm} \leq \frac{(md!)^{1/d}}{n(d+1)}.
\end{equation}
By \eqref{eqn:fiineq} and \eqref{eqn:fd+1bound} we obtain
\begin{equation}
\label{eqn:fi2ndineq}
\sum_{i=1}^{d} f_i(S) \geq \frac{nm}{d} \left ( 1-\frac{d}{n} - \frac{(md!)^{1/d}}{n} \right ) = \frac{nm}{d} \left ( 1 - \frac{(md!)^{1/d}}{n} - o(1) \right ) .
\end{equation}

For a fixed $S$ with $|S| = m$ we will bound from above the probability of the event that $| \partial^+S  \setminus B_S | \leq \delta | \partial^+S |$. It can be easily seen that $| \partial^+S \cap Y^{(d)} |$ is stochastically 
dominated by a random variable that has the $\Bin(nm,p)$ distribution. So by \eqref{eqn:ChernoffUpper} we have that
\begin{align}
\label{eqn:RHSbound}
 \Prob{| \partial^+S \cap Y^{(d)}  | \leq 3nmp} & \geq \Prob{\Bin(nm,p) \leq 3nmp} \nonumber \\
  & \geq 1 - \exp \left ( -\frac{4nmp}{3} \right ) \nonumber \\ 
  & = 1-\exp \left ( - \frac{(1+\eps) 4 d m \log n}{3}\right ).
\end{align}
Hence, we will now aim to bound the probability that $| \partial^+S  \setminus B_S | \leq \delta 3nmp$. We are interested in sets $S$ of $(d-1)$-dimensional faces of size
\[
m \leq \frac{1}{2}\binom{n}{d} \leq \frac{n^d}{2d!},
\]
which implies that
\[
\frac{nm}{d} \left ( 1 - \frac{(md!)^{1/d}}{n} - o(1) \right ) \geq \frac{nm}{d} \left ( 1 -2^{-1/d} - o(1) \right ).
\]
Hence by \eqref{eqn:fi2ndineq}, the random variable $| (\partial^+S \setminus B _S)\cap Y^{(d)}|$ is stochastically
bounded from below by a random variable distributed as 
$\Bin \left ( \lceil \frac{nm}{d} \left ( 1 -2^{-1/d} - o(1) \right ) \rceil, p\right )$.
Thus, 
\begin{align*}
 \Prob{| (\partial^+S \setminus B _S)\cap Y^{(d)}| \leq \delta 3nmp} \leq \Prob{\Bin \left (\left\lceil \frac{nm}{d} \left ( 1 -2^{-1/d} - o(1) \right )\right\rceil, p\right ) \leq \delta 3nmp }.
\end{align*}
Let 
\[
 \mu =\left\lceil \frac{nm}{d} \left ( 1 -2^{-1/d} - o(1) \right ) \right\rceil p\geq  (1+\eps) \left ( 1 -2^{-1/d} - o(1) \right ) m \log n.
\]
We have
\begin{align*}
  \frac{\mu - \delta 3nmp}{\mu} \geq 1 - \frac{3 \delta d}{1 -2^{-1/d}} - o(1) > 0
\end{align*}
for
\[
\delta <  \frac{ 1 -2^{-1/d} }{ 3 d }.
\]
Hence, by the Chernoff bound  \eqref{eqn:ChernoffLower} we obtain
\begin{equation}
\label{eqn:LHSbound}
 \Prob{|( \partial^+S \setminus B_S) \cap Y^{(d)} | \leq \delta 3nmp} \leq \exp \left ( - \frac{\left ( 1 -2^{-1/d} - 3 \delta d - o(1) \right )^2 }{2} \mu \right ).
\end{equation}

We shall consider two cases: 1. $n^{d(1-\alpha)} \leq m \leq \tfrac{1}{2} \binom{n}{d}$, and 2. $m < n^{d(1-\alpha)}$, for some $0 < \alpha < 1$ to be specified later.

Let us consider the case $m \geq n^{d(1-\alpha)}$ first. The number of sets $S$ of size $m$ is then at most
\begin{align*}
 \binom{\binom{n}{d}}{m} & \leq \left ( \frac{e \binom{n}{d}}m \right )^m \leq \exp \left ( m + m \log \frac{n^d}{m} \right ) \leq \exp \left ( m + m \log n^{\alpha d} \right ) \\
 & = \exp \left ( (1+ o(1)) \alpha d m \log n \right ).
\end{align*}
There are at most $n^d$ possible values of $m$ in that region. Hence, if
\[
\alpha < \frac{(1+\eps) \left ( 1 -2^{-1/d} - o(1) \right ) \left ( 1 -2^{-1/d} - 3 \delta d - o(1) \right )^2 }{2d},
\]
then by \eqref{eqn:RHSbound}, \eqref{eqn:LHSbound}, and the union bound, we have that \eqref{eqn:mainExpansion} holds with probability $1-o(1)$ for all sets $S$ of size at least $n^{d(1-\alpha)}$.

Next, we consider the case when $m < n^{d(1-\alpha)}$, i.e., when $m^{1/d}/n < n^{-\alpha}$. By \eqref{eqn:fd+1bound} we have
\[
  |B_S| = f_{d+1}(S) \leq mn \frac{(md!)^{1/d}}{(d+1)n} \leq mn^{1-\alpha} \frac{d}{d+1} \leq mn^{1-\alpha}.
\]
So $|B_S \cap Y^{(d)}|$ is stochastically bounded from above by a random variable that 
distributed as $\Bin (\lfloor mn^{1-\alpha}\rfloor ,p)$.
Let $k_0 = \lceil 3mn^{1-\alpha/2}p \rceil$. By the Chernoff bound~\eqref{eqn:ChernoffUpper} we have that
\begin{align}
\label{eqn:smallB}
 \Prob{| B_S \cap Y^{(d)}| \geq k_0 } & \leq \Prob{\Bin(\lfloor mn^{1-\alpha}\rfloor,p) \geq (1+2n^{\alpha/2}) mn^{1-\alpha}p} \nonumber \\
  & \leq \exp \left ( -\frac{4 n^\alpha n^{1-\alpha} mp}{3} \right ) \nonumber \\ 
  & = \exp \left ( - \frac{(1+\eps) 4d m \log n}{3}\right ).
\end{align}
By \eqref{eqn:fi2ndineq} and the fact that $m=o(n^d)$ we obtain
\[
\sum_{i=1}^{d} f_i(S) \geq \left ( 1 - o(1) \right ) \frac{nm}{d},
\]
which implies that $| ( \partial^+S \setminus B_S )\cap Y^{(d)} |$ stochastically dominates the $\Bin \left ( \frac{nm}{d} \left ( 1 - o(1) \right ), p\right )$ distribution. Since
\[
k_0 = o \left ( \E{\Bin \left ( \frac{nm}{d} \left ( 1 - o(1) \right ), p\right )} \right ),
\]
we then have that
\begin{align}
\label{eqn:largeShadow}
 \Prob{| (\partial^+S \setminus B_S) \cap Y^{(d)} | \leq k_0} & \leq \Prob{\Bin \left ( \frac{nm}{d} \left ( 1 - o(1) \right ), p\right ) \leq  k_0 } \nonumber \\
  & = \sum_{k=0}^{k_0} \Prob{\Bin \left ( \frac{nm}{d} \left ( 1 - o(1) \right ), p\right ) = k } \nonumber \\
  & \leq 2 k_0 \Prob{\Bin \left ( \frac{nm}{d} \left ( 1 - o(1) \right ), p\right ) = k_0 } \nonumber \\
  & \leq 2 k_0 \binom{ \frac{nm}{d} \left ( 1 - o(1) \right )}{k_0} p^{k_0} (1-p)^{\frac{nm}{d} \left ( 1 - o(1) \right )-k_0} \nonumber \\
  & \leq 2 k_0 \left ( \frac{ 3nmp }{dk_0} \right )^{k_0} (1-p)^{\frac{nm}{d} \left ( 1 - o(1) \right )-k_0} \nonumber \\
  & \leq 2 k_0 \left ( \frac{ 3nmp }{dk_0} \right )^{k_0} \exp \left ( - \frac{nmp}{d} \left ( 1 - o(1) \right )+k_0p \right ) \nonumber \\
  & = 2 k_0 \left ( \frac{ 3nmp }{dk_0} \right )^{k_0} \exp \left ( - (1 - o(1) ) (1+\eps) m \log n  \right ).
\end{align}
Now,
\begin{align}
\label{eqn:fewCases}
 \left ( \frac{ 3nmp }{d} \right )^{k_0} & = \exp \left ( k_0 \log \frac{ 3nmp }{d} \right ) \nonumber \\
  & \leq \exp \left ( \frac{4 d m \log n}{ n^{\alpha/2}} \log (m \log n)  \right ) \nonumber \\
  & = \exp \left ( o ( m \log n) \right ).
\end{align}
Combining \eqref{eqn:smallB}, \eqref{eqn:largeShadow} and \eqref{eqn:fewCases} together we obtain
\begin{align}
\label{eqn:boundSmallSets}
& \Prob{| (\partial^+S \setminus B_S) \cap Y^{(d)} | \geq | \partial^+S  | / 2 } \nonumber \\
  & \qquad \geq \Prob{| B_S \cap Y^{(d)}| \leq k_0 \mbox{ and } |  (\partial^+S \setminus B_S) \cap Y^{(d)}| 
  \geq k_0 } \nonumber \\
   & \qquad \geq 1 - \exp \left ( - (1 - o(1) ) (1+\eps) m \log n  \right ).
\end{align}

The bound in \eqref{eqn:boundSmallSets} is not strong enough to apply it with the union bound over all sets $S$ of size $m$. However, we assumed that $S$ is tightly connected and we will now exploit this assumption. We bound the number of tightly connected sets $S \in Y^{(d-1)}$ with $|S| = m$ as follows. Order the set $Y^{(d-1)}$ of $(d-1)$-faces in an arbitrary way; for example, identifying every face $\sigma$ with an ordered tuple $(\sigma_0, \ldots, \sigma_{d-1})$ where $\sigma_0 < \ldots < \sigma_{d-1}$, we could order the faces increasingly in the lexicographic order. We can pick the first face $\sigma \in S$ in $\binom{n}{d}$ many ways. We then perform a breadth-first-search on $S$: we first find all neighbours of $\sigma$, i.e., faces that share $d-1$ vertices with $\sigma$. Exploring these faces according to the selected order, we then find all yet unexplored faces that share $d-1$ vertices with consecutive neighbours of $\sigma$. Then, we move to the second neighbourhood of $\sigma$ and find all of their still unexplored neighbours. Since, having picked $\sigma$, we have to discover a total of $m-1$ faces and these will be first found as one of the offspring of one of $m$ faces, we see that there are at most $\binom{2m-2}{m-1} \leq 4^m$ many ways to assign the numbers of offspring to consecutive faces 
(a collection of $m$ non-negative integers which sum up to $m-1$).

Any $(d-1)$-dimensional face $\sigma$ has at most $dn$ neighbours, as we have $d$ vertices in $\sigma$ we can drop, and at most $n$ vertices not in $\sigma$ can be added to form the neighbour. Hence, for any choice of the numbers of neighbours first explored by consecutive faces, there are at most $(dn)^m$ ways to pick these neighbours. Thus, the number of tightly connected sets of size $m$ is at most
\[
4^m (dn)^m = \exp((1+o(1))m \log n).
\]
As again the number of values of $m$ we have to consider is at most $n^d$, by \eqref{eqn:boundSmallSets} and the union bound we see that with probability $1-o(1)$, \eqref{eqn:mainExpansion} holds for all sets $S$ with $|S| \leq n^{d(1-\alpha)}$. This completes the proof of Theorem~\ref{thm:mainExpansion}.

\end{proof}

We can now easily show that the assumption that $S$ is tightly connected can be dropped in Theorem~\ref{thm:mainExpansion} yielding a lower bound on $\Phi_{Y(n,p;d)}$ and completing the
proof of Theorem~\ref{thm:conductance}.
\begin{cor}
Let $Y = Y(n,p;d)$ with $np=(1+\eps)d\log n$ for $\eps>0$ fixed. 
There exists $\delta > 0$ such that w.h.p. the following holds. For any set $S \subset Y^{(d-1)}$
we have
\begin{equation}
\label{eqn:mainAllSets}
 | (\partial^+S  \setminus B_S)\cap Y^{(d)} | \geq \delta | \partial^+S \cap Y^{(d)}|.
\end{equation}
\end{cor}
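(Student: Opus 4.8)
The plan is to bootstrap from Theorem~\ref{thm:mainExpansion} by splitting an arbitrary $S$ into tightly connected pieces. First I would recall the reduction used just before Theorem~\ref{thm:mainExpansion}: by Lemma~\ref{lem:minDegree} we have w.h.p.\ $\dgr{\sigma}\ge d+1$ for every $\sigma\in Y^{(d-1)}$, so (as only the range $0<\pi(S)\le\frac12$ is relevant for \eqref{eq:conductance_lower}) it suffices to establish \eqref{eqn:mainAllSets} for all $S$ with $0<|S|\le\frac12\binom{n}{d}$. Fix such an $S$ and write $S=S_1\uplus\cdots\uplus S_k$, where $S_1,\dots,S_k$ are the vertex sets of the connected components of the auxiliary graph on $S$ whose edges are the pairs $\sigma\sim\sigma'$; each $S_j$ is tightly connected and $|S_j|\le|S|\le\frac12\binom{n}{d}$.

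The combinatorial core is to check that the components have non-interacting shadows. The key observation is that any $d$-face $\rho$ has all of its $(d-1)$-subfaces pairwise $\sim$-related, since any two of them share $d-1$ vertices and both lie inside $\rho$. Hence, for $\rho\in\partial^+S$, the subfaces of $\rho$ that belong to $S$ all lie in one and the same component $S_{j(\rho)}$; thus $\rho\in\partial^+S_{j(\rho)}$ and $\rho\notin\partial^+S_i$ for $i\ne j(\rho)$, giving the disjoint union
\[
\partial^+S=\biguplus_{j=1}^{k}\partial^+S_j.
\]
The same observation shows that if $\rho\in\partial^+S_j$ then $\partial\rho\subset S$ forces $\partial\rho\subset S_j$, so $B_S\cap\partial^+S_j=B_{S_j}$, and therefore
\[
(\partial^+S\setminus B_S)\cap Y^{(d)}=\biguplus_{j=1}^{k}\bigl((\partial^+S_j\setminus B_{S_j})\cap Y^{(d)}\bigr).
\]

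Finally I would condition on the single w.h.p.\ event furnished by Theorem~\ref{thm:mainExpansion} (for its constant $\delta>0$), which simultaneously guarantees \eqref{eqn:mainExpansion} for every tightly connected set of size at most $\frac12\binom{n}{d}$, so that no further union bound is incurred. Applying it to each $S_j$ and summing the two displays above gives
\[
|(\partial^+S\setminus B_S)\cap Y^{(d)}|=\sum_{j=1}^{k}|(\partial^+S_j\setminus B_{S_j})\cap Y^{(d)}|\ \ge\ \delta\sum_{j=1}^{k}|\partial^+S_j\cap Y^{(d)}|=\delta\,|\partial^+S\cap Y^{(d)}|,
\]
which is \eqref{eqn:mainAllSets}; substituting into \eqref{eq:conductance_lower} then yields $\Phi_{Y(n,p;d)}\ge\delta/(d(d+1))>0$ w.h.p., completing the proof of Theorem~\ref{thm:conductance}. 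I expect the only genuine subtlety to be the middle step: one must verify that passing to tightly connected components induces an honest \emph{partition} of $\partial^+S$ — and hence of $(\partial^+S\setminus B_S)\cap Y^{(d)}$ — rather than merely a cover, since otherwise summing the per-component bounds would overcount; everything else is a direct appeal to the theorem already established.
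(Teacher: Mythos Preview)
Your proposal is correct and follows essentially the same route as the paper: decompose $S$ into its tightly connected components, observe that distinct components have disjoint upper shadows (so the per-component inequalities from Theorem~\ref{thm:mainExpansion} can be summed without overcounting), and conclude. You are in fact slightly more careful than the paper in two places --- you make explicit the reduction to $|S|\le\tfrac12\binom{n}{d}$ (so that each $|S_j|\le\tfrac12\binom{n}{d}$ and Theorem~\ref{thm:mainExpansion} applies), and you spell out why $B_S\cap\partial^+S_j=B_{S_j}$ --- whereas the paper combines the component bounds via the mediant inequality rather than a direct sum, which is equivalent. One tiny quibble: the disjointness $\partial^+S=\biguplus_j\partial^+S_j$ is only guaranteed after intersecting with $Y^{(d)}$, since $\sigma\sim\sigma'$ requires a witnessing $d$-face in $Y^{(d)}$; but as your final computation uses only the intersected version this is harmless.
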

\begin{proof}
First, observe that if $U, V$ are distinct maximal tightly connected sets in $Y^{(d-1)}$ then $\partial^+U \cap \partial^+ V = \emptyset$. Indeed, if $\rho \in \partial^+ U \cap \partial^+ V$ then there exist $\sigma_1 \in U, \sigma_2 \in V$ such that $|\rho \cap \sigma_1| = |\rho \cap \sigma_2| = d$, but that implies $|\sigma_1 \cap \sigma_2| = d-1$, so $\sigma_1, \sigma_2$ are incident; a contradiction.

Hence, let $S$ be a union of maximal tightly connected sets $S_1, \ldots, S_p$. For $1 \leq i \leq p$, let
\[
 B_i = \{ \rho \in \partial^+ S_i : \partial \rho \subset S_i \}
\]
and $B= \cup_{i=1}^p B_i$.
Set $\alpha_i = |(\partial^+S_i \cap Y^{(d)}) \setminus B_i |$ and $\beta_i = | \partial^+S_i \cap Y^{(d)} |$. By Theorem \ref{thm:mainExpansion} with probability $1-o(1)$ we have $\alpha_i / \beta_i \geq \delta$ for all $i$.

Now observe that $a,b,c,d > 0$ and $a/b > c/d$ implies $a> cb/d$ and therefore
\[
\frac{a+c}{b+d} > \frac{c\frac{b}{d}+c}{b+d} = \frac{c\frac{b+d}{d}}{b+d} = \frac{c}{d}.
\]
Thus by the disjointness of the sets $\partial^+S_1, \ldots, \partial^+ S_p$ we deduce that with probability $1-o(1)$ we have
\[
\frac{ | (\partial^+S \cap Y^{(d)}) \setminus B | }{ | \partial^+S \cap Y^{(d)} |} \geq \min_{1 \leq i \leq p} \frac{\alpha_i}{\beta_i} \geq \delta.
\]
This completes the proof.
\end{proof}

\section{Conclusions} This paper is a study of various measures of expansion in the Linial-Meshulam random complex $Y(n,p;d)$ past the cohomological connectivity threshold. We considered the spectral gap of the combinatorial 
Laplace operator and showed that w.h.p. it is very close to the the Cheeger constant associated with the simplicial complex. Furthermore, we showed that both quantities are w.h.p. very close to the minimum co-degree of the random simplicial complex. We determined explicitly the latter using the large deviations theory of the binomial distribution.  

Finally, we considered a random walk on the $d-1$-faces of the random simplicial complex, which generalises the standard random walk on graphs.  In particular, we considered the conductance of such a random walk and showed that w.h.p. it is bounded away from zero. 

The above results were obtained for $p$ such that $np = (1+\eps )d \log n$, for any $\eps >0$ fixed. 
Our proofs seem to work when $\eps  =\eps (n) \to 0$ as $n\to \infty$ slowly enough. 
A natural next step would be to consider these quantities for $p$ that is closer to the threshold $d \log n /n$. Indeed, the supercritical regime is for $p$ such that $np = d \log n  + \omega (n)$, where $\omega(n) \to \infty$ as 
$n \to \infty$ arbitrarily slowly. We believe it would be interesting to extend the analysis to this range of $p$ as well. 
This would complete the picture of the evolution of the expansion properties of $Y(n,p;d)$. 

\section{Acknowledgements} We would like to thank Eoin Long for suggesting to us the use of the Kruskal-Katona theorem in the context of bounding the combinatorial expansion of $Y(n,p;d)$.

\bibliographystyle{plain}
\bibliography{simp_expansion}

\end{document}